\documentclass[a4paper,oneside,portrait,11pt]{AmsArt}

\usepackage[margin=1in]{geometry}


\usepackage[ps,all,arc,rotate]{xy}
 
\usepackage{amsfonts,amscd,amsthm,amsgen,amsmath,amssymb}
\usepackage[all]{xy}
\usepackage[vcentermath]{youngtab}
\usepackage {graphicx}
\usepackage{epstopdf}
\usepackage{amsfonts}
\usepackage{amsthm}
\usepackage{amsmath}
\usepackage{amsfonts}
\usepackage{latexsym}
\usepackage{amssymb}
\usepackage{epsfig,color}
\usepackage{graphicx, amssymb}
 \usepackage{color}
 \usepackage{epsfig,color}
\usepackage{graphicx}
\usepackage{caption}
\usepackage{subcaption}
\usepackage{wrapfig}
\usepackage {graphicx}
\usepackage{amsfonts}
\usepackage{amsthm}
\usepackage{amsmath}
\usepackage{amsfonts}
\usepackage{latexsym}
\usepackage{amssymb}
\usepackage{epsfig,color}
\usepackage{graphicx, amssymb}
\usepackage{epstopdf}

\usepackage{hyperref}
\hypersetup{
  bookmarks=true,
}

\newcommand{\M}{\mathcal{M}_{reg}}

\newtheorem{definition}[]{Definition} 
\newtheorem{theorem}[definition]{Theorem}
\newtheorem{corollary}[definition]{Corollary}
\newtheorem{proposition}[definition]{Proposition}
\newtheorem{remark}[definition]{Remark}
\newtheorem{lemma}[definition]{Lemma}
\theoremstyle{definition}



\def\CF{{\mathcal F}}

\def\CL{{\mathcal L}}
\def\CM{{\mathcal M}}

\def\CO{{\mathcal O}}

\def\CV{{\mathcal V}}


\newcommand{\C}{\mathbb{C}}
\newcommand{\Z}{\mathbb{Z}}
\newcommand{\R}{\mathbb{R}}

\newcommand{\Sp}{{ Sp}}

\title{A geometric approach to orthogonal Higgs bundles }
 \author{ Laura P. Schaposnik  }


\date{Department of Mathematics, University of Illinois, Chicago, 60647, USA. 
\\ \indent ~~schaposnik$at$uic.edu}

\begin{document}

\baselineskip=1.1\baselineskip

\begin{abstract}  We give a geometric characterisation of the topological invariants associated to $SO(m,m+1)$-Higgs bundles  through KO-theory and the Langlands correspondence between orthogonal and symplectic Hitchin systems. By defining the split orthogonal spectral data, we obtain a natural grading of the moduli space of  $SO(m,m+1)$-Higgs bundles.
  \end{abstract}
\maketitle

\section{Introduction}

  Higgs bundles were first studied by Nigel Hitchin in 1987, and appeared as solutions of Yang-Mills self-duality equations on a Riemann surface \cite{N1}. Classically, a {\it Higgs bundle} on a compact Riemann surface $\Sigma$ of genus $g\geq 2$ is a pair $(E,\Phi)$ where 
 $E$ is a holomorphic vector bundle  on $\Sigma$, 
and   the {\it  Higgs field} $\Phi: E\rightarrow E\otimes K$ is a holomorphic 1-form,
 for $K:=T^*\Sigma$.  
Higgs bundles can also be defined for complex semisimple groups $G_{\C}$ and their real forms, and through stability conditions one can construct their moduli spaces $\mathcal{M}_{G_\C}$ (e.g. see \cite{N2}).  

 A natural way of studying the moduli space of Higgs bundles   is through the {\it Hitchin fibration}, 
  sending the class of a Higgs bundle $(E,\Phi)$ to the coefficients of the characteristic polynomial $\det(\eta I -\Phi )$. The generic fibre  is an abelian variety   which can be seen through line bundles on an algebraic curve $S$, the {\it spectral curve} associated to the Higgs field as introduced in \cite{N2}. The \textit{spectral data}  is then given by the line bundle on $S$ satisfying certain conditions. In the case of classical Higgs bundles, the smooth fibres are Jacobian varieties of $S$.  The Hitchin fibration was   defined for classical complex Lie groups in \cite[Section 5]{N2}, and following \cite[Section 7]{N5} one may consider Higgs bundles with real structure group $G$ as fixed point sets in the moduli space of Higgs bundles for the complexified group $G_\C$, therefore obtaining $G$-Higgs bundles as real points inside the Hitchin fibration (e.g. see \cite{N5, gothen,thesis} and references therein).

We dedicate this short note to the study of the geometry of the moduli space of   $SO(m,m+1)$-Higgs bundles inside $\CM_{SO(2m+1,\C)}$. In particular, whilst in this non hermitian symmetric case there is no Toledo invariant, one can consider the Langlands dual set up of $Sp(2m,\C)$-Higgs bundles $(E=W\oplus W^*,\Phi')$  to understand the role of the symplectic Toledo invariant from the orthogonal perspective, as well as to construct the spectral data.  
By considering real Higgs bundles as fixed points of an involution (e.g. see \cite[Section 3.3.1]{thesis}), we see the moduli space of $SO(m,m+1)$-Higgs bundles $(V=V_+\oplus V_-, \Phi)$ inside the $SO(2m+1,\C)$-Hitchin fibration. 

The characteristic polynomials of   $SO(m,m+1)$ and $Sp(2m,\R)$-Higgs fields define a spectral curve $\pi:S\rightarrow \Sigma$ in the total space of the canonical bundle $K$ whose equation is 
\begin{eqnarray}\eta^{2m}+a_1\eta^{2m-2}+\ldots+a_{m-1}\eta^2+a_m=0,\label{yo}\end{eqnarray}
for $\eta$ the tautological section of $\pi^*K$ and $a_i\in H^0(\Sigma, K^{2i})$ . This is a $2m$ covering of the Riemann surface, generically smooth and ramified over $4m(g-1)$ points, the zeros of $a_m$. 
In order to understand the topological invariants associated to  $SO(m,m+1)$-Higgs bundles, one has to consider a subdivisor $D$ of the ramification divisor, over which a natural involution $\sigma:\eta\mapsto -\eta$ acts as $-1$, and whose degree we denote by $M$ following the notation of \cite{umm}. The value of $M$ is closely related to the {\it Toledo invariant} (see \cite[Section 6]{classes}), and in particular one can deduce the following:
%
  \vspace{0.05 in}

\noindent {\bf Theorem \ref{comp-sp}.} {\it Each even  invariant $0<M\leq 4m(g-1)$  labels   a component of the moduli space of $Sp(2m,\R)$-Higgs bundles which intersects the nonsingular fibres of the Hitchin fibration for $Sp(2m,\C)$-Higgs bundles given by a fibration of a $\Z_2$-vector space over the total space of a vector bundle on the symmetric
product $S^M\Sigma$.}
 \vspace{0.05 in}

In the case of  orthogonal Higgs bundles, one has the following:
 \vspace{0.05 in}

\noindent {\bf  Proposition \ref{fibreSO}.} {\it 
 The intersection of the moduli space $\CM_{SO(m,m+1)}$ with the regular fibres of the $SO(2m+1,\C)$-Hitchin fibration is given by two copies of }
 \begin{eqnarray}
\{~L\in {\rm Prym}(S,S/\sigma)~:~L^2\cong \CO~\}~/~ H^1(S/\sigma, \Z_2).\nonumber
\end{eqnarray}

Each of the two copies corresponds to whether the orthogonal bundle lifts to a spin bundle or not. Moreover, there is a decomposition of the torsion 2 points in the Prym variety ${\rm Prym}(S,S/\sigma)[2] \cong H^1(S/\sigma, \Z_2)\oplus \Z_2([a_m])^{ev}/b_0,$ 
where $\Z_2([a_m])^{ev}$ denotes subdivisors of the divisor $[a_m]$ with even number of +1, and $b_0:=(1,\ldots, 1)$.
Thus   the spectral data of an $SO(m,m+1)$-Higgs bundle is given, up to equivalence,  by 
\begin{itemize}
\item a line bundle $\CF\in H^1(S/\sigma, \Z_2)$, and
\item a divisor $D\in \Z_2([a_m])^{ev}/b_0$ of degree $M$. 
\end{itemize}

 Since $SO(m,m+1)$ retracts onto $S(O(m)\times O(m+1))$,  an $SO(m,m+1)$-Higgs bundle $(V_{+}\oplus V_{-}, \Phi)$ carries three topological invariants: the Stiefel-Whitney classes $\omega_1(V_+)=\det(V_+)$, and $\omega_2(V_\pm)$. Through a $K$-theoretic approach following the methods of \cite{slice,classes}, in Section \ref{section-so} we can further classify these invariants in terms of their spectral data: 
 \vspace{0.05 in}

\noindent {\bf Theorem \ref{teo2}.} {\it  The  Stiefel-Whitney classes  of an $SO(m,m+1)$-Higgs bundle $(V=V_-\oplus V_+, \Phi)$ with spectral data $(S/\sigma, \CF, D)$   are given by}
\begin{eqnarray}
 \omega_1(V_+)&=&{\rm Nm}(\CF)  \in H^1(\Sigma, \Z_2);\\
\omega_2(V_+)&=&\varphi_{  S
/\sigma}(\CF)+\varphi_\Sigma({\rm Nm}(\CF)) \in \Z_2;\\
\omega_2(V_-)&=&\left\{\begin{array} {ccc}
\varphi_{  S
/\sigma}(\CF)+\varphi_\Sigma({\rm Nm}(\CF)) &{\rm if} &\omega_2(V)=0\\
\varphi_{  S
/\sigma}(\CF)+\varphi_\Sigma({\rm Nm}(\CF)) +1&{\rm if} &\omega_2(V)=1
\end{array}\right.
 \end{eqnarray}
 {\it for $\varphi_\Sigma$ and $\varphi_{ S/\sigma}$ the analytic mod 2 indices of the curves, and ${\rm Nm}(\CF)$ the Norm  on $\Sigma$. } 
  \vspace{0.05 in}

By considering $\CF\otimes K_{\bar S}$ as a new spin structure,   one can see $\omega_{2}(V_{\pm})$ purely in terms of spin structures in Corollary \ref{corro}. 
Moreover, by analysing spectral data through the induced 2-fold cover $\rho : S\rightarrow \bar S:=S/\sigma$, and recalling that  the orthogonal vector bundle  $V_+\oplus V_-$ is recovered as an extension   defined through the divisor $D$ (see \cite[Section 4.2]{N3}), 
 one obtains the number of points in each of the regular fibres of the Hitchin fibration for a fixed invariant $M$. \vspace{0.05 in}

\noindent {\bf  Proposition \ref{numbers}.} {\it The number of points in a regular fibre    of the $SO(2m+1,\C)$ Hitchin fibration corresponding to $SO(m,m+1)$-Higgs bundles with even invariant $M$ is}
\begin{small}$\left(\begin{array}{c} 4m(g-1)\\M
\end{array}\right).\nonumber
$ \end{small}

\vspace{0.05 in}

By considering the parametrisation of the moduli space through  spectral data, we obtain a natural grading of the moduli space of $SO(m,m+1)$-Higgs bundles leading to a description of Zariski dense open sets in each  component:
 \vspace{0.05 in}

\noindent {\bf Theorem \ref{teo1}.} {\it Each fixed even invariant $0<M\leq 4m(g-1)$ labels a component of the moduli space of   $SO(m,m+1)$-Higgs bundles whose intersection with the regular fibres of the Hitchin fibration is a  covering of a vector space over the symmetric product $S^M\Sigma$.}

 \vspace{0.05 in}

It is important to note that these components will possibly (and often do so) meet over the discriminant locus of the Hitchin fibration, and thus one needs to do further analysis to understand the connectivity of the moduli space. An example of how to see the intersection of the components through the monodromy of the associated {\it Gauss-Manin connection} for $SO(2,3)$-Higgs bundles is discussed in   \cite[Section 6.3]{mono}. 

A geometric description of the above covering is given in Section \ref{geometrytori}, recovering some of the results appearing in  \cite[Section 6.4]{brian}.
Moreover, in the so-called {\it maximal Toledo invariant case} on the symplectic side, which corresponds to $M=0$, one can deduce  that when $m$ is odd the intersection of $\CM_{SO(m,m+1)}$ with the smooth fibres of the $SO(2m+1,\C)$ Hitchin fibration is given by $2^{2g}$ copies of ${\rm Prym}(\bar S,\Sigma)$ over a vector space.

The moduli space of $SO(m,m+1)$-Higgs bundles considered in this paper is an example of what is known as $(B,A,A)$-brane in the moduli space $\CM_{SO(2m+1,\C)}$ of complex Higgs bundles. As such,  these branes have dual $(B,B,B)$-branes in the dual moduli space $\CM_{Sp(2m,\C)}$  (see \cite[Section 12]{LPS_Kap}). In \cite[Section 7]{slice} it was conjectured what the support of this dual brane should be,  the whole moduli space $\CM_{Sp(2m,\C)}$ of symplectic Higgs bundles.  We conclude this   note with some further comments on this duality  in Section \ref{last}, as well as on the relation between the Hitchin components in both split symplectic and orthogonal  $(B,A,A)$-branes in Langlands dual groups, and some implications of the geometric description of the spectral data given in this paper. 

 \subsubsection*{Acknowledgements} This research was inspired by fruitful conversations with David Baraglia, \linebreak Steve Bradlow and Nigel Hitchin, and the author is also thankful for discussions with Ben Davidson and Alan Thompson.   The paper  was written with partial support of the U.S. National Science Foundation grants DMS 1107452, 1107263, 1107367:  the GEAR Network for short research visits. The work of the author is also supported by NSF grant DMS-1509693.

\section{The Hitchin fibration}\label{fibration}
Recall from \cite{N2} that an $Sp(2m,\mathbb{C})$-Higgs bundle is a pair $(E,\Phi')$ for $E$ a rank $2m$ vector bundle  with a symplectic form $\omega(~,~)$, and the Higgs field $\Phi'\in H^{0}(\Sigma, {\rm End}(E)\otimes K)$ satisfying
$\omega(\Phi' v,w)=-\omega(v,\Phi' w).$ 
Similarly, an $SO(2m+1,\mathbb{C})$-Higgs bundle is a pair $(V,\Phi)$ for $V$ a holomorphic vector bundle of rank $2m+1$ with  a non-degenerate symmetric bilinear form $(v,w)$, and  $\Phi$ a Higgs field  in $H^{0}(\Sigma,{\rm End}_{0}(V)\otimes K)$ which satisfies
$(\Phi v,w)=-(v,\Phi w).$

The spectral curves defined by $SO(2m+1,\C)$-Higgs bundles  and $Sp(2m,\C)$-Higgs bundles  have similar equations (e.g. see \cite[Section 3-4]{N3}), and are given by a $2m$-fold cover $\pi:S\rightarrow \Sigma$ in the total space of $K$ whose equation is
$\eta^{2m}+a_{1}\eta^{2m-2}+\ldots +a_{m-1}\eta^{2}+a_{m}=0$ as in \eqref{yo}. 
 The  curve $S$ has an involution $\sigma$ which acts as $\sigma(\eta)=-\eta$ and thus we may consider the quotient curve $ \overline{S}:=S/\sigma$ in the total space of $K^{2}$,  for which $\rho: S\rightarrow \bar S$ is a double cover: \begin{eqnarray}
\xymatrix{ S\ar[rd]_{2m:1}^{\pi}\ar[rr]^{2:1}_{\rho}&&\bar S\ar[ld]_{\bar \pi}^{m:1}\\
&\Sigma &}
\end{eqnarray} 
 
 The covers $S$ and $\bar S$ have, respectively,  genus $ g_{S}= 1+4m^{2}(g-1),$  and $g_{\bar S}=(2m^2-m)(g-1)+1$. Moreover, by the adjunction formula, their canonical bundles can be written, respectively,   as
$K_S=\pi^*K^{2m}$ and
$K_{\bar S}=\bar \pi^*K^{2m-1}.
$ 
 As shown in \cite{N2}, the Hitchin fibration for both moduli spaces $\CM_{SO(2m+1,\C)}$ and $\CM_{Sp(2m,\C)}$ is given  over $\mathcal{A}=\bigoplus_{i=1}^{m}H^0(\Sigma, K^{2i}).$
From \cite[Section 3]{N3}, the generic fibres for $\mathcal{M}_{Sp(2m,\C)}$ are given by 
\begin{eqnarray}
{\rm Prym}(S,\bar S),\label{fibresp}
\end{eqnarray}
 and from \cite[Section 4]{N3}, the generic fibres for $\mathcal{M}_{SO(2m+1,\C)}$ are given by (two copies of)
 \begin{eqnarray}
 {\rm Prym}(S,\bar S)/\rho^*H^1(\bar S,\mathbb{Z}_2).\label{fibreso}
 \end{eqnarray}
 
 In what follows we shall study the components of the moduli space of Higgs bundles for split real forms by considering, from \cite[Theorem 4.12]{thesis}, points of order two in the generic fibres \eqref{fibresp} and \eqref{fibreso}.

\section{$Sp(2m,\R)$-Higgs bundles}\label{section-sp}

We shall now consider $Sp(2m,\R)$-Higgs bundles, which from \cite[Theorem 4.12]{thesis} can be seen in the generic fibres of the Hitchin fibration as points of order two in \eqref{fibresp}, and are given by $Sp(2m,\C)$-Higgs bundles which decompose as $(E=W\oplus W^*, \Phi')$.  
 Fixing a choice of $\Theta$ characteristic $L_0:=K^{1/2}$,  it is shown in \cite{N3} that the vector bundle $E$ is recovered as $\pi_*U$ for
  $U:=L  \otimes K^{(2m-1)/2}. $
   Note that the condition $L\in{\rm Prym}(S,\bar S)[2]:=\{ L\in {\rm Prym}(S,\bar S)~:~ L^2\cong \CO\}$ is equivalent to requiring
$U^2\cong K_S\pi^*K^*.$
Since points  in ${\rm Prym}(S,\bar S)[2]$ are given by line bundles $L$ on $S$ for which  $\sigma^*L\cong L^*\cong L ,$ following \cite[Theorem 3.5]{umm} they are classified by the action of the involution $\sigma$ on $L$ over its fixed point set (i.e., the ramification divisor of $S$). The involution $\sigma$ acts as $\pm 1$ over some subset of $M$ points of the ramification divisor $[a_m]$, and the number of Higgs bundles appearing in each fibre for a fixed invariant $M$ is described by Hitchin in \cite[Section 6]{classes}.  
  
  Higgs bundles for the real symplectic group have associated a topological invariant, the Toledo invariant (e.g. see \cite{gothen}), defined as
$ | \tau(W\oplus W^*, \Phi')|:=|c_1(W)|,\nonumber
$  and satisfy a Milnor-Wood type inequality $|c_1(W)|\leq m(g-1)$. 
  Moreover, from \cite[Section 6]{classes} the class can be expressed as
  \begin{eqnarray}
  w_1(W):=c_1(W)=-\frac{M}{2}+m(g-1),\nonumber\label{degree}
  \end{eqnarray}
 and its mod 2 value defines the invariant
$ c_1(W)~ ({\rm mod} ~2).\nonumber
$
 Note that since the invariant $M$ is even, within the moduli space of $Sp(2m,\R)$-Higgs bundles, the value of $c_1(W)~ ({\rm mod} ~2)$   differentiates components depending on the values of $M~({\rm mod~4})$. 
 \begin{remark} For $\Sp(2m,\C)$-Higgs bundles, the invariant $M$ appears as $n_-$ in \cite[Section 4]{N3}. In the case of $Sp(2m,\R)$-Higgs bundles, it is the invariant $l$ of \cite[Section 6]{classes}.
\end{remark}
 From \cite[Section 6]{classes}, the number of elements in ${\rm Prym}(S,\bar S)[2]$ corresponding to  $M$ is
\begin{eqnarray}
\left(\begin{array}{c} 4m(g-1)\\M
\end{array}\right)\times 2^{2g_{\bar S}}.\nonumber 
\end{eqnarray}
 In order to describe the geometry of the  components given by these Higgs bundles, recall from \cite[Proposition 4.15]{mono}, that a convenient splittings can be chosen for the short exact sequence
  \begin{eqnarray}
  0\rightarrow H^1(\bar S, \Z_2) \xrightarrow{\rho^*} {\rm Prym}(S,\bar S)[2]\rightarrow \Z_2([a_m])^{ev}/b_0\rightarrow 0,\nonumber
  \end{eqnarray}
  where $b_0$ is the divisor in $\Sigma$ which has $+1$ for all zeros of $a_m$. 
  Then, one can write 
  \begin{eqnarray}
 {\rm Prym}(S,\bar S)[2] \cong H^1(\bar S, \Z_2)\oplus \Z_2([a_m])^{ev}/b_0. \label{fibrespreal}\label{am}
  \end{eqnarray}
Hence, over each point in the base $\mathcal{A}$, one has  the set of divisors $D$ of degree $M$ over which the involution acts as $-1$ (as in \cite[Section 3]{umm}) given by a point in $ \Z_2([a_m])^{ev}/b_0$, together with the $\Z_2$ vector space $H^1(\bar S, \Z_2)$ of dimension 
$2g_{\bar S}= 2m(m-1)(g-1)+2.\nonumber
$

\begin{lemma}\label{lemmaH}
The space $H^1(\bar S, \Z_2)$ is given by
\begin{eqnarray}  
  {\rm Prym}(\bar S,\Sigma)[2]\oplus  \bar\pi^*H^1(\Sigma, \Z_2)~&{\rm for~}& m~{\rm odd},\label{seq-3} \\
 \bar\pi^*H^1(\Sigma, \Z_2)\oplus ({\rm Prym}(\bar S,\Sigma)[2]/  \bar\pi^*H^1(\Sigma, \Z_2))\oplus   \bar\pi^*H^1(\Sigma, \Z_2)~&{\rm for~}& m~{\rm even}.\label{seq-4}
 \end{eqnarray}
\end{lemma}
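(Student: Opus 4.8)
The plan is to realise $H^1(\bar S,\Z_2)$ as ${\rm Jac}(\bar S)[2]$ and to analyse it through the two natural maps attached to the $m$-fold cover $\bar\pi\colon\bar S\to\Sigma$: the pullback $\bar\pi^*\colon H^1(\Sigma,\Z_2)\to H^1(\bar S,\Z_2)$ and the norm (transfer) ${\rm Nm}=\bar\pi_*\colon H^1(\bar S,\Z_2)\to H^1(\Sigma,\Z_2)$, which satisfy ${\rm Nm}\circ\bar\pi^*=m\cdot\mathrm{id}$. Modulo $2$ this composite is the identity when $m$ is odd and is zero when $m$ is even, and this single parity dichotomy is what will produce the two cases \eqref{seq-3} and \eqref{seq-4}.

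First I would pin down the two summands intrinsically. The key geometric input is that over a generic point of the base $\CA$ the spectral cover has full symmetric monodromy, so every class in $H_1(\Sigma,\Z_2)$ is represented by a loop whose monodromy fixes a sheet: one adjusts an arbitrary representative by the (homologically trivial) small loops around the branch points, which realise transpositions and hence generate all of $S_m$, without changing the homology class. Lifting such a loop along a fixed sheet shows that $\bar\pi_*\colon H_1(\bar S,\Z_2)\to H_1(\Sigma,\Z_2)$ is surjective. By the mod-$2$ projection formula $\bar\pi^*\alpha\cup\beta=\alpha\cup\bar\pi_*\beta$ and Poincar\'e duality, surjectivity of $\bar\pi_*$ is equivalent to injectivity of $\bar\pi^*$, so $\bar\pi^*H^1(\Sigma,\Z_2)\cong H^1(\Sigma,\Z_2)$ has dimension $2g$ irrespective of the parity of $m$. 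Surjectivity of ${\rm Nm}$ then gives the short exact sequence
\begin{eqnarray}
0\to \ker({\rm Nm})\to H^1(\bar S,\Z_2)\xrightarrow{{\rm Nm}} H^1(\Sigma,\Z_2)\to 0,\nonumber
\end{eqnarray}
and a dimension count identifies the kernel: one always has ${\rm Prym}(\bar S,\Sigma)[2]\subseteq\ker({\rm Nm})$, while $\dim_{\Z_2}\ker({\rm Nm})=2g_{\bar S}-2g=2(g_{\bar S}-g)=\dim_{\Z_2}{\rm Prym}(\bar S,\Sigma)[2]$, using $\dim{\rm Prym}(\bar S,\Sigma)=g_{\bar S}-g$, so in fact $\ker({\rm Nm})={\rm Prym}(\bar S,\Sigma)[2]$.

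Now I would split according to parity. For $m$ odd, ${\rm Nm}\circ\bar\pi^*=\mathrm{id}$ exhibits $\bar\pi^*$ as a section of the sequence above, which therefore splits as $H^1(\bar S,\Z_2)={\rm Prym}(\bar S,\Sigma)[2]\oplus\bar\pi^*H^1(\Sigma,\Z_2)$, giving \eqref{seq-3}. For $m$ even, instead ${\rm Nm}\circ\bar\pi^*=0$, so the injective image $\bar\pi^*H^1(\Sigma,\Z_2)$ now lies inside $\ker({\rm Nm})={\rm Prym}(\bar S,\Sigma)[2]$. Choosing a $\Z_2$-complement of $\bar\pi^*H^1(\Sigma,\Z_2)$ inside ${\rm Prym}(\bar S,\Sigma)[2]$, namely a copy of ${\rm Prym}(\bar S,\Sigma)[2]/\bar\pi^*H^1(\Sigma,\Z_2)$, together with any complement to ${\rm Prym}(\bar S,\Sigma)[2]$ in $H^1(\bar S,\Z_2)$ (which by the sequence is isomorphic to $H^1(\Sigma,\Z_2)$, hence to $\bar\pi^*H^1(\Sigma,\Z_2)$), yields the three-term splitting \eqref{seq-4}. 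Thus the content of \eqref{seq-4} is exactly the filtration $0\subseteq\bar\pi^*H^1(\Sigma,\Z_2)\subseteq{\rm Prym}(\bar S,\Sigma)[2]\subseteq H^1(\bar S,\Z_2)$, whose graded pieces I read off above.

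The only step requiring genuine geometric input, and hence the main obstacle, is the surjectivity of $\bar\pi_*$ on mod-$2$ homology (equivalently the injectivity of $\bar\pi^*$) when $m$ is even: there the transfer identity $\bar\pi_*\bar\pi^*=m$ degenerates to $0$ and is useless, so one genuinely needs the full-monodromy property of the generic spectral cover. Everything else reduces to linear algebra over $\Z_2$ together with the dimension formula for the Prym and the fact that, over $\Z_2$, every extension splits, so that the isomorphism type is determined by the dimensions of the graded pieces of the above filtration.
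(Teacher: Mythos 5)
Your argument is correct and follows essentially the same route as the paper: the norm exact sequence $0\to{\rm Prym}(\bar S,\Sigma)[2]\to H^1(\bar S,\Z_2)\xrightarrow{{\rm Nm}} H^1(\Sigma,\Z_2)\to 0$, split by $\bar\pi^*$ when $m$ is odd because ${\rm Nm}\circ\bar\pi^*=m\cdot\mathrm{id}$, and the filtration $\bar\pi^*H^1(\Sigma,\Z_2)\subset{\rm Prym}(\bar S,\Sigma)[2]\subset H^1(\bar S,\Z_2)$ when $m$ is even. The only difference is that you also justify the exactness of that sequence (surjectivity of the norm on $2$-torsion via the full symmetric monodromy of the generic cover, plus a dimension count identifying the kernel), a point the paper simply takes as given.
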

\begin{proof}
In order to understand the space $H^1(\bar S, \Z_2)$ one needs to take into account the parity of $m$, since $\bar S$ is an $m$-fold cover of the compact Riemann surface $\Sigma$. 
Considering the Norm map
\begin{eqnarray}
0\rightarrow {\rm Prym}(\bar S,\Sigma)[2]\rightarrow H^1(\bar S, \Z_2) \xrightarrow{Nm} H^1(\Sigma, \Z_2)\rightarrow 0, \label{seq-1}
\end{eqnarray}
note that for a line bundle $L$ on $\Sigma$ one has that $Nm(\bar\pi^*L)=mL$, and hence when $m$ is odd the pullback $\bar \pi^*$ gives a splitting of the short exact sequence \eqref{seq-1}. Therefore, in this case one has
\begin{eqnarray}
H^1(\bar S, \Z_2)\cong {\rm Prym}(\bar S,\Sigma)[2]\oplus \bar \pi^*H^1(\Sigma, \Z_2).\nonumber 
\end{eqnarray}

When $m$ is even, the image of $\bar \pi^*: H^1(\Sigma, \Z_2)\rightarrow H^1(\bar S, \Z_2)$ is contained  in ${\rm Prym}(\bar S,\Sigma)[2]$, thus giving a filtration
$ H^1(\Sigma, \Z_2)\subset {\rm Prym}(\bar S,\Sigma)[2]\subset  H^1(\bar S, \Z_2),$
which induces the splitting (via isomorphism theorems for $Nm$)
\begin{eqnarray}
H^1(\bar S, \Z_2)
 &\cong & \bar\pi^*H^1(\Sigma, \Z_2)\oplus ({\rm Prym}(\bar S,\Sigma)[2]/  \bar\pi^*H^1(\Sigma, \Z_2))\oplus   \bar\pi^*H^1(\Sigma, \Z_2),\nonumber
\end{eqnarray}
and thus the lemma follows.\end{proof}

From the above, one has the following description of Zariski open sets in   components of the moduli spaces of $Sp(2m,\R)$-Higgs bundles which intersect the smooth fibres of the $Sp(2m,\C)$ Hitchin fibration:

\begin{theorem}\label{comp-sp}
Each even  invariant $0<M\leq 4m(g-1)$  labels   one   component of the moduli space of $Sp(2m,\R)$-Higgs bundles which intersects the nonsingular fibres of the Hitchin fibration for $Sp(2m,\C)$-Higgs bundles. This component is given by a fibration of a $\Z_2$-vector space over the total space of a vector bundle on the symmetric
product $S^M\Sigma$.\end{theorem}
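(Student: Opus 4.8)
The plan is to read the claimed fibration directly off the splitting \eqref{fibrespreal} of the two-torsion in the Prym variety, letting the point of the Hitchin base $\mathcal{A}$ vary and tracking how each summand behaves. Over the Zariski-open locus $\mathcal{A}^{\circ}\subset\mathcal{A}$ where the spectral curve $S$ is smooth (so $a_m$ has simple zeros), the $Sp(2m,\R)$-Higgs bundles lying in the fibre \eqref{fibresp} are exactly the points of ${\rm Prym}(S,\bar S)[2]$, and by \eqref{fibrespreal} these decompose as $H^1(\bar S,\Z_2)\oplus \Z_2([a_m])^{ev}/b_0$. Fixing the even invariant $M$ selects, inside the second summand, the classes represented by a subdivisor $D$ of $[a_m]$ of degree $M$ over which $\sigma$ acts as $-1$, while the first summand $H^1(\bar S,\Z_2)\cong\Z_2^{2g_{\bar S}}$ is the $\Z_2$-vector space that will fibre over everything else; by Lemma \ref{lemmaH} its dimension $2g_{\bar S}=2m(m-1)(g-1)+2$ stays constant as the base moves.

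Next I would identify the base and build the vector bundle. A degree-$M$ subdivisor $D\subset[a_m]\subset\Sigma$ is a point of the symmetric product $S^M\Sigma$, and the assignment $(a_1,\dots,a_m,D)\mapsto D$ should exhibit a Zariski-dense open subset of $S^M\Sigma$ as the base of the component: for $g\geq 2$ the bundle $K^{2m}$ is positive enough that a generic effective divisor of degree $M\le 4m(g-1)=\deg K^{2m}$ is cut out by some $a_m\in H^0(\Sigma,K^{2m})$, i.e.\ $h^0(\Sigma,K^{2m}(-D))>0$ for generic such $D$, so this map is dominant. Over a point $D\in S^M\Sigma$ the remaining Hitchin data compatible with $D$ consists of the lower coefficients $(a_1,\dots,a_{m-1})\in\bigoplus_{i=1}^{m-1}H^0(\Sigma,K^{2i})$, a fixed vector space, together with a section $a_m\in H^0(\Sigma,K^{2m}(-D))$ vanishing on $D$. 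Since $h^0(\Sigma,K^{2m}(-D))$ is constant on a Zariski-open subset of $S^M\Sigma$ by Riemann--Roch, these spaces patch into an algebraic vector bundle $\mathcal{V}\to S^M\Sigma$; deleting the discriminant (the non-smooth locus of $S$) leaves a Zariski-open subset of its total space, which is the ``total space of a vector bundle on $S^M\Sigma$'' of the statement.

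Then I would assemble the three layers. Each point of the total space of $\mathcal{V}$ away from the discriminant determines a smooth $S$ and its quotient $\bar S$, and the residual spectral datum is precisely the summand $H^1(\bar S,\Z_2)$, a $\Z_2$-vector space of the constant dimension above; as the base point moves these assemble into a family of $\Z_2$-vector spaces fibred over the total space of $\mathcal{V}$, which is the asserted structure. That a fixed $M$ \emph{labels} one component is the statement that $M$ (with the mod-$4$ refinement recorded in the introduction) is a complete invariant of the connected components of $\mathcal{M}_{Sp(2m,\R)}$, known from the classification in \cite{gothen,classes}; what the construction supplies is the geometry of the intersection of that component with the smooth fibres. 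Here the base $S^M\Sigma$ is connected, and the monodromy permuting the zeros of $a_m$ preserves $M$ while acting transitively on degree-$M$ subdivisors, so the $\Z_2([a_m])^{ev}/b_0$-label is exactly $M$; the separate $\Z_2$-vector space sheets $H^1(\bar S,\Z_2)$ are only joined up over the discriminant, as flagged in the introduction.

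I expect the main obstacle to be twofold. The first is the honest construction of $\mathcal{V}$: one must show $h^0(\Sigma,K^{2m}(-D))$ is locally constant on a large enough Zariski-open subset of $S^M\Sigma$ to yield a genuine vector bundle, controlling the special divisors where it jumps. The second, and more conceptual, is reconciling the combinatorial description of the summand $\Z_2([a_m])^{ev}/b_0$ with the geometric symmetric product $S^M\Sigma$: the evenness constraint and, above all, the quotient by $b_0=(1,\dots,1)$ — which identifies a subdivisor of degree $M$ with its complement of degree $4m(g-1)-M$ — mean that the naive bijection ``$D\leftrightarrow$ a choice of $M$ points'' must be corrected by these $\Z_2$-relations before the base is cleanly identified with $S^M\Sigma$.
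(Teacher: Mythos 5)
Your proposal follows essentially the same route as the paper's own proof (which itself follows \cite[Theorem 4.2]{umm}): the divisor $D$ of degree $M$ from the summand $\Z_2([a_m])^{ev}/b_0$ is viewed as a point of $S^M\Sigma$, the choice of $a_m\in H^0(\Sigma,K^{2m}(-D))$ gives the vector bundle over the symmetric product, the remaining differentials complete the spectral curve, and the residual fibre datum is the $\Z_2$-vector space $H^1(\bar S,\Z_2)$. You are in fact more careful than the paper about the points it glosses over (constancy of $h^0(\Sigma,K^{2m}(-D))$, the evenness and $b_0$-quotient in identifying the base, and the correct index range $\bigoplus_{i=1}^{m-1}H^0(\Sigma,K^{2i})$ for the remaining coefficients), so the proposal is correct and matches the paper's argument.
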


\begin{proof}
We shall follow the proof of \cite[Theorem 4.2]{umm}, taking into consideration the structure of the intersection of $\mathcal{M}_{Sp(2m,\R)}$ with the generic fibres of the Hitchin fibration given in \eqref{fibrespreal}. The invariant $M$ gives the degree of a divisor $D$ which corresponds to the choice of an element in $ \Z_2([a_m])^{ev}/d_0. $ Hence, it can be seen as a point in $S^M\Sigma$, and the choice of the differential $a_m$ is then given by a section in $H^0(\Sigma, K^{2m}(-D))$, or equivalently, a vector bundle over the symmetric product. Then, in order to define the spectral curve in the smooth loci of the Hitchin base one needs to make a choice of the remaining differentials, and thus a choice of a point in 
$\bigoplus_{i=1}^{2m-2}H^0(\Sigma, K^{2i}).\nonumber$
Finally, the Higgs bundles in the   component are obtained by the remaining data in the fibre, which is a point in the $\Z_2$ vector space $H^1(\bar S, \Z_2)$.\end{proof}

 From the above theorem one can see that when   $M=0$, the space is given by a $2^{2g_{\bar S}}$ cover of a vector space over a point, and it is the monodromy action which needs to be considered from this perspective in order to deduce connectivity of this cover - this study, for low rank, can be found in \cite{mono}. In particular, for $Sp(4,\R)$ it was shown by Gothen in \cite{gothen} that $M=0$ labels several connected components, and it is shown in \cite[Section 6.3]{mono} how these components appear as orbits of the monodromy action of the corresponding Gauss-Manin connection on the  $Sp(4,\C)$ Hitchin fibration. 
 
As in the case of $U(m,m)$-Higgs bundles of \cite{umm}, the invariant and anti-invariant sections of $L\in {\rm Prym}(S,\bar S)[2]$   decompose the direct image bundle $\rho_*U:=  U_+\oplus   U_-$, leading to the symplectic decomposition $E=W\oplus W^*$ as
$W:=\bar \pi_*U_+$ and $W^*:=\bar \pi_*U_-.$

 \begin{lemma}\label{lemmaU}
 Let $D\in \Z_2[a_m]/b_0$ be the divisor of degree $M$ on which the involution $\sigma$ acts as $-1$. Then, there exists a line bundle $L_0\in {\rm Prym}(\bar S, \Sigma)$ such that 
  \begin{eqnarray}
   U_-= U_+ \otimes  \CO(D)\otimes K^*\otimes L_0 . 
 \end{eqnarray}
 \end{lemma}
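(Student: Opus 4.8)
The plan is to compute the two eigensheaves $U_\pm=(\rho_*U)^{\pm}$ and compare them, the essential input being that $U$ is $\sigma$-equivariant. Indeed, since $U=L\otimes\pi^*K^{(2m-1)/2}$ with $\pi^*K^{(2m-1)/2}$ pulled back from $\Sigma$ (hence $\sigma$-invariant) and $\sigma^*L\cong L$ for $L\in\Prym(S,\bar S)[2]$, there is a canonical lift of $\sigma$ to $U$, so the decomposition $\rho_*U=U_+\oplus U_-$ into $\pm1$-eigensheaves on $\bar S$ is defined. First I would record the tautological section $\eta\in H^0(S,\pi^*K)$, which satisfies $\sigma^*\eta=-\eta$ and whose divisor $(\eta)$ is exactly the ramification divisor $\tilde R$ of $\rho$, i.e. the fixed locus of $\sigma$; in particular $\pi^*K\cong\CO_S(\tilde R)$.

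Next I would exploit multiplication by $\eta$. Because $\eta$ is anti-invariant, the projection formula turns $\eta\colon U\to U\otimes\rho^*\bar\pi^*K$ into a map $\alpha\colon U_+\to U_-\otimes\bar\pi^*K$ interchanging the eigensheaves. Reading $\alpha$ as a section of $U_+^{-1}\otimes U_-\otimes\bar\pi^*K$ gives
\[
U_-\cong U_+\otimes\bar\pi^*K^{-1}\otimes\CO_{\bar S}(Z_\alpha),
\]
where $Z_\alpha$ is the vanishing divisor of $\alpha$. The heart of the argument is to identify $Z_\alpha$ by a local model at each ramification point $p$: with $z$ a coordinate on $S$, $w=z^2$ on $\bar S$ and $\sigma(z)=-z$, one checks that if $\sigma$ acts as $+1$ on the fibre $U_p\cong L_p$ then $\alpha$ is a local isomorphism, whereas if $\sigma$ acts as $-1$ then $\alpha$ vanishes to order one. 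Since by definition $D$ is precisely the fixed-point divisor on which $\sigma$ acts as $-1$, this yields $Z_\alpha=\bar D$, the image of $D$ in $\bar S$, and hence
\[
U_-\cong U_+\otimes\bar\pi^*K^{-1}\otimes\CO_{\bar S}(\bar D).
\]

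It then remains to put this in the stated form. I would identify $\bar\pi^*K^{-1}$ with the anti-invariant part $\CM^{-1}$ of $\rho_*\CO_S=\CO_{\bar S}\oplus\CM^{-1}$: since $\rho^*\CM=\CO_S(\tilde R)=\pi^*K=\rho^*\bar\pi^*K$ and $\rho^*$ is injective on $\mathrm{Pic}(\bar S)$ for this branched cover, one gets $\CM=\bar\pi^*K$, so $\bar\pi^*K^{-1}=K^*$ in the notation of the lemma. Finally I would produce $L_0$ by a norm computation: as $\bar\pi$ is unramified over the zeros of $a_m$ on the relevant sheet, it restricts to an isomorphism $\bar D\xrightarrow{\sim}D$, whence $\mathrm{Nm}\big(\CO_{\bar S}(\bar D)\big)=\CO_\Sigma(D)$. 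Writing $\CO(D)$ for the degree-$M$ line bundle on $\bar S$ with $\mathrm{Nm}\,\CO(D)=\CO_\Sigma(D)$ determined by $D$ and setting $L_0:=\CO_{\bar S}(\bar D)\otimes\CO(D)^{-1}$, one finds $\deg L_0=0$ and $\mathrm{Nm}(L_0)=\CO_\Sigma$, so $L_0\in\Prym(\bar S,\Sigma)$ and $U_-=U_+\otimes\CO(D)\otimes K^*\otimes L_0$, as claimed.

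The main obstacle is the local analysis at the ramification points: everything reduces to deciding, point by point, whether the eigensheaf $U_-$ acquires an extra twist, and this is governed precisely by the sign of the $\sigma$-action on $L$, i.e. by the divisor $D$. Getting this bookkeeping right — so that the twist is exactly $\CO_{\bar S}(\bar D)$, and not the full branch divisor nor $\bar\pi^*D$ — is the crux; the identification $\CM=\bar\pi^*K$ and the verification that $L_0$ lands in the Prym are then routine degree and norm checks.
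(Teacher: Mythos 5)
Your proof is correct, but it takes a genuinely different route from the paper's. The paper does not touch the local structure of $\rho_*U$ at all: it invokes relative duality to get $\bar\pi_*U_+\cong(\bar\pi_*U_-)^*$, imports two norm formulas from the $U(m,m)$ paper, namely ${\rm Nm}(U_+)=-{\rm Nm}(U_-)+2m(m-1)K$ and $D={\rm Nm}(U_+^*)+{\rm Nm}(U_-)+mK$, and then observes that two line bundles on $\bar S$ with the same norm differ by an element of $\Prym(\bar S,\Sigma)$ --- which is precisely where the (a priori unknown) $L_0$ comes from. Your argument instead computes the eigensheaf decomposition directly: multiplication by the anti-invariant tautological section $\eta$, whose divisor is the fixed locus of $\sigma$, gives the map $\alpha\colon U_+\to U_-\otimes\bar\pi^*K$, and the local model at the ramification points shows $\alpha$ vanishes exactly on $\bar D$. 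This is self-contained where the paper's proof leans on external formulas, and it proves the sharper statement $U_-\cong U_+\otimes\CO_{\bar S}(\bar D)\otimes\bar\pi^*K^{-1}$ exactly, i.e.\ with $L_0$ trivial once $\CO(D)$ is read as $\CO_{\bar S}(\bar D)$ (which is the only reading consistent with the degree formulas \eqref{U+}--\eqref{U-}); the paper's weaker ``up to $\Prym(\bar S,\Sigma)$'' conclusion is exactly the indeterminacy of recovering a line bundle from its norm, and your geometric argument removes it. Two cosmetic remarks: your final paragraph manufacturing a possibly nontrivial $L_0$ from an undetermined ``degree-$M$ line bundle with prescribed norm'' is circular as written and also unnecessary, since the lemma only asserts existence of some $L_0\in\Prym(\bar S,\Sigma)$ and $L_0=\CO$ suffices; and the choice of lift of $\sigma$ to $U$ is canonical only up to a global sign, which swaps $U_\pm$ and replaces $D$ by its complement --- this is harmless here because $D$ is only defined modulo $b_0$, but it is worth saying.
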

\begin{proof}The symplectic structure of $E$ is obtained though relative duality (e.g. see \cite[Section 4]{classes}), and in particular it implies that 
$\bar \pi_*U_+ \cong (\bar \pi_* U_-)^*.$
Hence, as in \cite[Section 5]{umm}, the line bundles $U_\pm$ are not independent. Indeed, from \cite[Eq.~(15)]{umm} one has that
\begin{eqnarray}
{\rm Nm}(U_+) = -{\rm Nm}(U_{-}) + 2m(m - 1)K.
\end{eqnarray}
 The above can be also written in terms of the divisor $D\in \Z_2[a_m]/b_0$ of degree $M$ on which the involution $\sigma$ acts as $-1$, as
$ D = {\rm Nm}(U_+^*) + {\rm Nm}(U_-) + mK.
$ Therefore, viewing $D$ as a divisor on $\bar S$ (since it is a subset of the ramification divisor of the $m$-fold cover $\bar \pi:\bar S\rightarrow \Sigma$),  up to a line bundle $L_0\in {\rm Prym}(\bar S, \Sigma)$, the result follows. 
 \end{proof}
 
 From \cite[Eq. (9) \& (10)]{umm} one can write the degrees of the line bundles $U_{\pm}$ in terms of $M$. 
In particular, recalling that $U=L  \otimes K^{(2m-1)/2}$ one has that $
\deg(U)= m(2m-1)(g-1),\nonumber
$ and therefore the degrees of the invariant and anti-invariant line bundles on $\bar S$ can be expressed as
\begin{eqnarray}
\deg(U_+)&=&m(2m-1)(g-1)-\frac{M}{2},\label{U+}\\
\deg(U_-)&=& m(2m-3)(g-1)+\frac{M}{2}. \label{U-}
\end{eqnarray}
Equivalently,  from \cite[Eq. (9)]{umm} one can write the degree of the rank $m$ bundle $W$ as
\begin{eqnarray}
\deg(W)= \frac{\deg(U)}{2}-\frac{M}{2} -(2m^2-2m)(g-1) =-\frac{M}{2}  +m(g-1),\nonumber
\end{eqnarray}
recovering the result in \cite[Eq.~(7)]{N2}.
The case of $M=0$ corresponds to the maximal Toledo invariant setting, for which it is known that within the covering space  one has $2^{2g}$ connected components, the so called {\it Hitchin components},   parametrising rich geometric structures. Moreover, when $m=2$ the $\Z_2$ vector space $H^1(\bar S, \Z_2)$ can be understood in terms of line bundles of order two over the Riemann surface $\Sigma$, and we shall comment on this case in Section \ref{last}.

\section{$SO(m,m+1)$-Higgs bundles}\label{section-so}

An  $SO(m,m+1)$ Higgs bundle is a pair $(V,\Phi)$ where $V=V_{+}\oplus V_{-}$ for $V_{\pm}$ complex vector spaces with orthogonal structures of dimension $m$ and $m+1$ respectively. The Higgs field is a section in $H^{0}(\Sigma, ({\rm Hom}(V_{-}, V_{+})\oplus {\rm Hom}(V_{+},V_{-}))\otimes K)$  given by
\[\Phi=\left(\begin{array}{cc}
              0&\beta\\
\gamma&0
             \end{array}
\right)~{~\rm for~}~\gamma \equiv -\beta^{\rm T},\] 
where $\beta^{\rm T}$ is the orthogonal transpose of $\beta$. As mentioned previously, since $SO(m,m+1)$ retracts onto $S(O(m)\times O(m+1))$, the Higgs bundles $(V_{+}\oplus V_{-}, \Phi)$ carry three topological invariants, the Stiefel-Whitney classes $\omega_1(V_+)$, and $\omega_2(V_\pm)$ (note that $\omega_1(V_-)=\omega_1(V_+)$ since $\det V_- =\det V_+^*$). By further requiring the Higgs bundle to be in the connected component of the identity, i.e. taking $SO(m,m+1)_0$-Higgs bundles, one would obtain pairs with $\omega_1(V_{\pm})=\CO$ (as considered, for instance, in \cite{aparicio}). 
In what follows we shall give a geometric description of these topological invariants, relate them to the ones for $Sp(2m,\R)$-Higgs bundles obtained in \cite[Section 6]{classes}, and finally use this description to characterise Zariski dense open sets in each connected component of the moduli space of $SO(m,m+1)$-Higgs bundles. On several occasions, it will be important to distinguish when $m$ is even or odd, and we shall do so within this section. 

\subsection{KO-theory of $\Sigma$}
In order to 
  discuss the topology of orthogonal bundles on the surface $\Sigma$ we use KO-theory. For this,  we shall recall some results from   \cite[Section 6]{classes} and    \cite{three}.
The Stiefel-Whitney classes  of $V_\pm$ can be seen as   classes $[V_{\pm}]\in KO(\Sigma)$ where 
\begin{eqnarray}
[V_{\pm}]\in KO(\Sigma)&\simeq& \Z \oplus H^{1}(\Sigma,\Z_2)\oplus \Z_2\nonumber\\
V_{\pm}&\mapsto&(rk(V_{\pm}), \omega_1(V_{\pm}), \omega_2(V_{\pm})).\nonumber
\end{eqnarray}

 Taking the map  given by the  total Stiefel-Whitney class $\omega = 1 + \omega_1  + \omega_2 $  to the multiplicative group $\Z \oplus H^{1}(\Sigma,\Z_2)\oplus \Z_2$, we consider the generators given by holomorphic line bundles $L$ such that $L^2 \simeq O$, and the class $\Omega = \CO_p + \CO_p^* - 2$ where $\CO_p$ is the holomorphic line bundle given by a point $p \in \Sigma$. Then, for $\alpha(x)$  the class of a line bundle $x\in H^1(\Sigma,\Z_2)$ and $(x,y)$ the intersection form,  
 $\alpha(x+y)=\alpha(x)+\alpha(y)-1+(x,y)\Omega.$
 As in \cite[Section 5]{classes}, the isomorphism between the additive group $\widetilde{KO}(\Sigma)$ and the multiplicative group $KO(\Sigma)$ is determined by the relations
 \begin{eqnarray}
  \omega_1(\alpha(x))=x~,~    \omega_1(\Omega)=0,
~{~\rm ~and~}~ \omega_2(\Omega)=c_1(\CO_p)~({\rm mod}~2)~=[\Sigma]\in H^{2}(\Sigma,\Z_2). \nonumber
  \end{eqnarray}
 With this notation, the classes $[V_{\pm}]$ satisfy 
$  [V_\pm ] = rk(V_\pm) - 1 + \alpha(\omega_1(V_\pm )) + \omega_2(V_{\pm})\Omega.\nonumber
$ 
Choosing a $\Theta$ characteristic $K^{1/2}$, the classes  $[V_{\pm}]$ have associated an analytic mod 2 index
\begin{eqnarray}\label{hola}
\varphi_{\Sigma}(V_{\pm} ) = \dim H^0(\Sigma, V_\pm \otimes K^{1/2}) ~({\rm mod} ~2),
\end{eqnarray}
and the characteristic class $\omega_2$ is independent of which spin structure $K^{1/2}$ is chosen. It follows  from \cite[Theorem 1]{classes} that the classes $\omega_2(V_{\pm}) $ satisfy
\begin{eqnarray}\omega_2(V_{\pm})= \varphi_{\Sigma}(V_{\pm})+\varphi_{\Sigma}(\det(V_{\pm})).\label{algo}\end{eqnarray}

 Moreover, $\varphi_{\Sigma}(\Omega)=1$ and the map  can be seen as the map to a point 
\[\varphi_{\Sigma}:KO(\Sigma)\rightarrow KO^{-2}(pt)\cong \Z_{2}.\]
 Since we are interested in understanding Higgs bundles through their spectral data, we note that as in \cite[Section 5]{classes}, the spin structures together with the covers $\pi:S\rightarrow \Sigma$ and $\bar \pi:\bar S\rightarrow \Sigma$ define push forward maps $KO(S)\rightarrow KO(\Sigma)$ and $
 KO(\bar S)\rightarrow KO(\Sigma)$.

 \subsection{Spectral data  for $SO(m,m+1)$-Higgs bundles} \label{some}
  In order to give a geometric description of  characteristic classes, 
  we shall define here  the spectral data associated to the  $SO(m,m+1)$-Higgs bundles. 
 One should note that since $SO(m,m+1)$-Higgs bundles lie completely inside the singular fibres of the    $SL(2m+1,\C)$ Hitchin fibration,   the analysis done in \cite[Section 5]{classes} can not be  directly applied.

In this case we consider points of order two in the fibres \eqref{fibreso} of the $SO(2m+1,\C)$-Hitchin fibration, which form two copies of the space 
\begin{eqnarray}
{\rm Prym}(S,\bar S)[2]/\rho^*H^1(\bar S, \Z_2),\label{quotient}
\end{eqnarray}
 a $\Z_2$ vector space of dimension $4m(g-1)-2.$  Moreover, the points in $\rho^*H^1(\bar S, \Z_2)$ are precisely those line bundles in ${\rm Prym}(S,\bar S)[2]$ with trivial action of $\sigma$ at all fixed points, i.e., with invariant $M=0$. From \cite[Theorem 4.12]{thesis} together with the structure of $\rho^*H^1(\bar S, \Z_2)$ from Lemma \ref{lemmaH},  the fibres \eqref{quotient} can be further described as follows.  
 
 \begin{proposition}\label{fibreSO}
 The intersection of the moduli space $\CM_{SO(m,m+1)}$ with the regular fibres of the $SO(2m+1,\C)$-Hitchin fibration is given by two copies of  
${\rm Prym}(S,\bar S)[2]~/~\rho^*H^1(\bar S, \Z_2)\nonumber
$ where the $\Z_2$ space $H^1(\bar S, \Z_2)
$  is given by
\begin{eqnarray}  
  {\rm Prym}(\bar S,\Sigma)[2]\oplus  H^1(\Sigma, \Z_2)~&{\rm for~}& m~{\rm odd},\label{seq-3} \\
 H^1(\Sigma, \Z_2)\oplus ({\rm Prym}(\bar S,\Sigma)[2]/  \pi^*H^1(\Sigma, \Z_2))\oplus   H^1(\Sigma, \Z_2)~&{\rm for~}& m~{\rm even}.\label{seq-4}
 \end{eqnarray}
 \end{proposition}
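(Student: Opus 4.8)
The plan is to assemble the statement from three ingredients that are already in place: the description \eqref{fibreso} of the regular fibres of the $SO(2m+1,\C)$-Hitchin fibration, the identification of $SO(m,m+1)$-Higgs bundles as points of order two from \cite[Theorem 4.12]{thesis}, and the decomposition of $H^1(\bar S,\Z_2)$ from Lemma \ref{lemmaH}. First I would recall that by \eqref{fibreso} a regular fibre consists of two copies of the quotient $P/G$, where I abbreviate $P:={\rm Prym}(S,\bar S)$ and $G:=\rho^*H^1(\bar S,\Z_2)$, the two copies recording whether the orthogonal bundle lifts to a spin bundle.

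Next I would locate the $SO(m,m+1)$-Higgs bundles inside this fibre. By \cite[Theorem 4.12]{thesis} they arise as the points of order two, which on the spectral curve correspond to line bundles $L$ on $S$ satisfying both the Prym condition $\sigma^*L\cong L^*$ and $L^2\cong\CO$; together these force $\sigma^*L\cong L^*\cong L$, exactly the classification by the $\pm 1$ action of $\sigma$ on the ramification divisor used in the symplectic case. The essential point is that the order-two condition is imposed on the Prym variety $P$ \emph{before} passing to the $SO$-quotient, so that the locus of such $L$ is the image of $P[2]={\rm Prym}(S,\bar S)[2]$ under the projection $P\to P/G$, namely $P[2]/G$. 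Running this over both copies of \eqref{fibreso} yields two copies of the space \eqref{quotient}.

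Finally, since $\rho^*$ is injective on $H^1(\bar S,\Z_2)$ — as recorded by the short exact sequence $0\to H^1(\bar S,\Z_2)\xrightarrow{\rho^*}{\rm Prym}(S,\bar S)[2]\to\Z_2([a_m])^{ev}/b_0\to 0$ of the previous section — the group $G$ being quotiented is isomorphic to $H^1(\bar S,\Z_2)$, and I would substitute its explicit description from Lemma \ref{lemmaH}, separating the cases $m$ odd and $m$ even, to obtain the two displayed decompositions. As a consistency check one computes $\dim_{\Z_2}(P[2]/G)=(4m^2+2m)(g-1)-\bigl((4m^2-2m)(g-1)+2\bigr)=4m(g-1)-2$, matching the dimension asserted for \eqref{quotient}.

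I expect the main obstacle to be the second step: one must verify that the relevant locus is exactly the image $P[2]/G$ rather than the full two-torsion $(P/G)[2]$ of the quotient abelian variety, which is strictly larger whenever $G\neq 0$ (its order exceeds that of $P[2]/G$ by a factor of $|G|$). The resolution is that the reality constraint $L^2\cong\CO$ is a condition on the line bundle $L\in P$ itself and not merely on its class modulo $G$, so one genuinely takes $P[2]$ first and only afterwards quotients by $G$. This is precisely the content of \cite[Theorem 4.12]{thesis}, and confirming that the involution $\sigma$ is compatible with the $G$-action — so that the two copies in \eqref{fibreso} each contribute a full copy of $P[2]/G$ — is the only point requiring genuine care.
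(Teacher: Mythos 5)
Your proposal takes essentially the same route as the paper, whose entire argument for Proposition \ref{fibreSO} is the paragraph preceding its statement: combine the description \eqref{fibreso} of the regular fibres, the identification of the real points as points of order two via \cite[Theorem 4.12]{thesis}, and the decomposition of $H^1(\bar S,\Z_2)$ from Lemma \ref{lemmaH}. Your additional care in distinguishing ${\rm Prym}(S,\bar S)[2]/\rho^*H^1(\bar S,\Z_2)$ from the full two-torsion of the quotient, together with the dimension count $4m(g-1)-2$, makes explicit points the paper leaves implicit.
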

%

%

From Proposition \ref{fibreSO},  the spectral data associated to an $SO(m,m+1)$-Higgs bundle, up to equivalences by \eqref{seq-3}-\eqref{seq-4},  is given by the intermediate spectral curve $\bar S$ together with
 a line bundle $\CF\in H^1(\bar S, \Z_2)$, and
  a divisor $D\in \Z_2([a_m])^{ev}/b_0$ of degree $M$.

\begin{remark}It is interesting to note that when $m=2$ the middle term in \eqref{seq-4} gives in fact the spectral data for a $K^2$-twisted $PGL(2,\R)$-Higgs bundle. Moreover, the component $ {\rm Prym}(\bar S,\Sigma)[2]$ gives the spectral data for $K^2$-twisted $SL(m,\R)$-Higgs bundles. 
\end{remark}

In order to recover $SO(m,m+1)$-Higgs bundles from the above spectral data,  we shall recall the relation between symplectic and orthogonal Higgs bundles as described in \cite[Section 4]{N3}. 
 As mentioned in Section \ref{section-sp}, the line bundle $L\in {\rm Prym}(S,\bar S)[2]$ defines a symplectic vector bundle as $E:=\pi_*U$, for    $U=L\otimes K^{1/2}_S\otimes \pi^* K^{-1/2}.\nonumber 
$  Then,  from \cite[Eq.~(7)]{N3} the orthogonal bundle $V$ is recovered as an extension
 \begin{eqnarray}
 0\rightarrow E\otimes K^{-1/2}\rightarrow V\rightarrow K^{m}\rightarrow 0,\label{extO}
 \end{eqnarray}
and therefore near the divisor defined by the section $a_m$, the orthogonal bundle $V$ of the $SO(m,m+1)$-Higgs pair $(V,\Phi)$ is recovered as
 $ V:=(E\otimes K^{-1/2})\oplus K^m.$

From \cite[Section 4.1]{N3}, the $2m+1$ vector bundle $V$ has trivial determinant and a nondegenerate symmetric bilinear form $g(v,w)$ for which $g(\Phi v,w)+g(v,\Phi w)=0$, related to the symplectic form on $E$. Indeed, by considering the Higgs field $\Phi$ on $V/K^{-m}$, one has a non-degenerate skew form on $V/K^{-m}$, and by choosing a square root $K^{1/2}$,  one obtains a skew form on $E=V/K^{-m}\otimes K^{-1/2}$ which is generically non degenerate:
$\omega(v,w)=g(\Phi v, w).$
 Moreover, the extension class in \eqref{extO} can be seen as a choice of trivialization of the line bundle $L\in {\rm Prym}(S,\bar S)$  which depends on the action of the involution $\sigma$, this is, on the divisor \linebreak$D\in \Z_2[a_m]/b_0$ (see \cite[Section 4.3]{N3}).
 
 The orthogonal structure induced on the rank $m$ and $m+1$ vector bundles 
$V_+\oplus V_-$ 
obtained through the spectral data 
in the fibre \eqref{quotient} can be understood in terms of a decomposition of the symplectic bundle 
$E:=E_+\oplus E_-$, through which locally one has
 \begin{eqnarray}
 V_-&=&E_-\otimes K^{-1/2}\oplus K^m,\label{bundlev-}\\
V_+&=& E_+\otimes K^{-1/2}.\label{bundlev+}
\end{eqnarray}

One should note that it is not the symplectic decomposition $E=W\oplus W^*$ which leads to the decomposition $E=E_+\oplus E_-$ on the orthogonal side. This becomes evident, for instance,  by considering the Hitchin components for both groups, and it is described in Section \ref{hitchin}.
%
   Furthermore, since $V_\pm$ form part of $GL(m,\C)$ and $GL(m+1,\C)$ Higgs bundles, from  \cite{N2} and \cite{bnr} there is a line bundle  on $\bar S$ whose direct image gives $V_+$ on $\Sigma$. 
Adopting   the notation of \cite[Section 5]{classes} we define $\pi_!$ and $\bar \pi_!$ by
\begin{eqnarray}
\pi_!(\CL)=\pi_*(\CL \otimes K^{1/2}_S \otimes \pi^*K^{-1/2}),~{\rm and ~} 
\bar\pi_!(\CL)=\pi_*(\bar\CL \otimes K^{1/2}_{\bar S} \otimes \pi^*K^{-1/2}),\label{orthogonal}
\end{eqnarray}
for $\CL$ and $\bar \CL$ line bundles on $S$ and $\bar S$. Then, as seen in Section \ref{section-sp}, the symplectic vector bundle $E$ is obtained as
$E:= \pi_!(L)$ 
for $L\in {\rm Prym}(S,\bar S)$. 
 When $\CL^2\cong \CO$ and $\bar L^2\cong \CO $,  the  bundles $\pi_!(\CL)$ and $\bar \pi_!(\bar \CL)$ acquire   orthogonal structures by relative duality, as shown in \cite[Section 4]{classes}. Hence, since $V_+$ has an orthogonal structure, following \cite[Section 4]{classes} and \cite{bnr} for $K^2$-twisted Higgs bundles, the vector bundle $V_+$ is obtained,     for some     $\CF\in H^1(\bar S, \Z_2), $ as 
    \begin{eqnarray}
  V_+= \bar \pi_!(\CF).
  \end{eqnarray}
  
 \begin{lemma}\label{lemmaF}
 For   $\CF\in H^1(\bar S, \Z_2)$, one has  $\det (\bar \pi_!(\CF))={\rm Nm}( \CF)$.
 \end{lemma}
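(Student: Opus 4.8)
The plan is to reduce the statement to the standard formula for the determinant of a finite direct image together with an explicit description of $\bar\pi_*\CO_{\bar S}$ as a spectral sheaf. First I would unwind the twist in the definition of $\bar\pi_!$. Since $K_{\bar S}=\bar\pi^*K^{2m-1}$, the spin structure on $\bar S$ induced by the chosen $\Theta$-characteristic on $\Sigma$ is $K^{1/2}_{\bar S}=\bar\pi^*K^{(2m-1)/2}$, so the two theta-characteristics cancel:
\[
K^{1/2}_{\bar S}\otimes\bar\pi^*K^{-1/2}=\bar\pi^*K^{(2m-1)/2-1/2}=\bar\pi^*K^{m-1}.
\]
As this factor is pulled back from $\Sigma$, the projection formula gives $\bar\pi_!(\CF)=\bar\pi_*(\CF)\otimes K^{m-1}$, and hence, since $\bar\pi_*\CF$ has rank $m$,
\[
\det\bigl(\bar\pi_!(\CF)\bigr)=\det\bigl(\bar\pi_*\CF\bigr)\otimes K^{m(m-1)}.
\]

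Next I would invoke the standard identity for the determinant of a finite direct image, $\det(\bar\pi_*\CF)={\rm Nm}(\CF)\otimes\det(\bar\pi_*\CO_{\bar S})$ (which can be checked on $c_1$ via Grothendieck--Riemann--Roch and is used in the same style in \cite{classes,bnr}); here the holomorphic norm on Picard groups restricted to the $2$-torsion bundle $\CF$ agrees with the cohomological transfer ${\rm Nm}:H^1(\bar S,\Z_2)\to H^1(\Sigma,\Z_2)$ appearing in the statement. It then remains to identify $\det(\bar\pi_*\CO_{\bar S})$. For this I would use that $\bar S$ sits in the total space of $K^2$ as the spectral cover cut out by the degree-$m$ equation $\xi^m+a_1\xi^{m-1}+\dots+a_m=0$ in the tautological section $\xi=\eta^2$ of $\bar\pi^*K^2$; the standard spectral description of \cite{bnr} then yields
\[
\bar\pi_*\CO_{\bar S}\cong\bigoplus_{i=0}^{m-1}K^{-2i},\qquad\text{so}\qquad \det(\bar\pi_*\CO_{\bar S})=K^{-2\sum_{i=0}^{m-1}i}=K^{-m(m-1)}.
\]
As a consistency check this reproduces the genus $g_{\bar S}=(2m^2-m)(g-1)+1$ recorded earlier.

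Combining the three displays gives $\det(\bar\pi_!\CF)={\rm Nm}(\CF)\otimes K^{-m(m-1)}\otimes K^{m(m-1)}={\rm Nm}(\CF)$, as claimed. The step I expect to require the most care is pinning down $\det(\bar\pi_*\CO_{\bar S})$ \emph{exactly} rather than only up to $2$-torsion: relative duality by itself (which is precisely what endows $V_+=\bar\pi_!\CF$ with its orthogonal structure, forcing $\det V_+$ to be $2$-torsion) only shows that $\det(\bar\pi_!\CF)$ and ${\rm Nm}(\CF)$ differ by a fixed $2$-torsion class, and the numerical $c_1$-computation likewise leaves this ambiguity. It is the explicit spectral splitting of $\bar\pi_*\CO_{\bar S}$ that eliminates it, so I would take care to justify that decomposition rigorously.
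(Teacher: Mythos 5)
Your argument is correct and is essentially the paper's proof written out in full: the paper simply cites the determinant-of-direct-image formula from \cite[Section 4]{bnr} and observes that the twist $K_{\bar S}^{1/2}\otimes\bar\pi^*K^{-(2m-1)/2}$ is trivial, which is exactly what your combination of the projection formula, $\det(\bar\pi_*\CF)={\rm Nm}(\CF)\otimes\det(\bar\pi_*\CO_{\bar S})$, and the spectral splitting $\bar\pi_*\CO_{\bar S}\cong\bigoplus_{i=0}^{m-1}K^{-2i}$ establishes. Your extra care in pinning down $\det(\bar\pi_*\CO_{\bar S})=K^{-m(m-1)}$ exactly (not just up to $2$-torsion) is a worthwhile addition, not a deviation.
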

\begin{proof}  The determinant bundle of $\bar \pi_!(\CF)$ can be obtained through \cite[Section 4]{bnr}, leading to 
$\det (\bar \pi_!(\CF))=  {\rm Nm}(\CF \otimes K^{1/2}_{\bar S} \otimes \bar \pi^*K^{-(2m-1)/2} )
  =  {\rm Nm}( \CF).$ 
\end{proof}
%
%

   In order to understand how the other orthogonal bundle $V_-$ is reconstructed, we shall give now a construction of $E_-$ via the spectral data $\CF$ and $D$ modulo \eqref{seq-3}-\eqref{seq-4} (which in particular implies modulo ${\rm Prym}(\bar S,\Sigma)$).   Since $\det(E_+)\otimes \det (E_-)=\CO$, from \eqref{bundlev-}-\eqref{bundlev+} one has that $ \det(E_-)= {\rm Nm}(\CF)\otimes K^{-m/2}$. Therefore, for some $L_0\in {\rm Prym}(\bar S,\Sigma)$ one may write
   \begin{eqnarray}V_-= \bar \pi_*(L_0\otimes K_{\bar S}^{-1/2}\otimes \CF)\otimes K^{-1/2}. \end{eqnarray}
  Note that the choice of $L_0$ is equivalent to the one done  in Lemma \ref{lemmaU}, and th divisor $D$ gives the extension class as in the complex case described in \cite[Section 4.3]{N3}.

  \subsection{Characteristic classes for $SO(m,m+1)$-Higgs bundles}

  In what follows we shall see that the three Stiefel-Whitney classes of $SO(m,m+1)$-Higgs bundles $(V_+\oplus V_-, \Phi)$    can be described in terms of their spectral data, which from the previous sections is given modulo \eqref{seq-3}-\eqref{seq-4} by  
  \[(\CF,D)\in H^{1}(\bar S, \Z_2)\oplus \Z_2([a_m])/b_0.\]
  
   \begin{theorem}\label{teo2}
The  Stiefel-Whitney classes  of an $SO(m,m+1)$-Higgs bundle $(V_-\oplus V_+, \Phi)$ with spectral data $(\CF, D)\in {\rm Prym}(S,\bar S)[2]/ \rho^*H^1(\bar S, \Z_2)$ are given by
\begin{eqnarray}
\omega_1(V_+)&=&  {\rm Nm}(\CF)\in H^1(\Sigma, \Z_2),\\
\omega_2(V_+)&=&\varphi_{\bar S}(\CF) +\varphi_\Sigma({\rm Nm}(\CF))   \in \Z_2,\\
\omega_2(V_-)&=&\left\{\begin{array} {ccc}
\omega_2(V_+)&{\rm if} &\omega_2(V)=0,\\
\omega_2(V_+)+1&{\rm if} &\omega_2(V)=1.
\end{array}\right.\end{eqnarray}
 \end{theorem}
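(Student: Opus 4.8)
The plan is to treat the three classes in turn, feeding the explicit spectral realisations $V_+=\bar\pi_!(\CF)$ and $V_-=\bar\pi_*(L_0\otimes K_{\bar S}^{-1/2}\otimes\CF)\otimes K^{-1/2}$ into the $KO$-theoretic index formula \eqref{algo}. The first formula is essentially immediate: $\omega_1(V_+)$ is the mod $2$ reduction of $\det V_+$, and Lemma \ref{lemmaF} identifies $\det(\bar\pi_!(\CF))={\rm Nm}(\CF)$, which already lives in $H^1(\Sigma,\Z_2)$ since $\CF$ is $2$-torsion. So $\omega_1(V_+)={\rm Nm}(\CF)$ with no further work.

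For $\omega_2(V_+)$ I would begin from \eqref{algo}, namely $\omega_2(V_+)=\varphi_\Sigma(V_+)+\varphi_\Sigma(\det V_+)$. By the first formula the second summand is $\varphi_\Sigma({\rm Nm}(\CF))$. For the first summand I would unwind the definition of $\bar\pi_!$ in \eqref{orthogonal} and apply the projection formula: $V_+\otimes K^{1/2}=\bar\pi_*(\CF\otimes K_{\bar S}^{1/2}\otimes\bar\pi^*K^{-1/2})\otimes K^{1/2}=\bar\pi_*(\CF\otimes K_{\bar S}^{1/2})$, where the twist by $\bar\pi^*K^{-1/2}$ and the $K^{1/2}$ of the index cancel. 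Because $\bar\pi$ is finite, global sections are preserved, so $H^0(\Sigma,V_+\otimes K^{1/2})\cong H^0(\bar S,\CF\otimes K_{\bar S}^{1/2})$, and comparing mod $2$ dimensions gives $\varphi_\Sigma(V_+)=\varphi_{\bar S}(\CF)$ for the pullback spin structure $K_{\bar S}^{1/2}=\bar\pi^*K^{(2m-1)/2}$. Summing the two terms yields the stated expression for $\omega_2(V_+)$.

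For $\omega_2(V_-)$ I would not compute an index directly but instead exploit the orthogonal direct sum $V=V_+\oplus V_-$ via the Whitney sum formula, $\omega_2(V)=\omega_2(V_+)+\omega_1(V_+)\cup\omega_1(V_-)+\omega_2(V_-)$. Since $\omega_1(V_-)=\omega_1(V_+)$ and the mod $2$ intersection form on the closed orientable surface $\Sigma$ is symplectic, so that $x\cup x=0$ for every $x\in H^1(\Sigma,\Z_2)$, the cross term vanishes and $\omega_2(V)=\omega_2(V_+)+\omega_2(V_-)$. Solving over $\Z_2$ gives $\omega_2(V_-)=\omega_2(V_+)+\omega_2(V)$, which is precisely the displayed case distinction once one recalls from Proposition \ref{fibreSO} that the two copies of the fibre record the value of $\omega_2(V)$, i.e. whether $V$ lifts to a spin bundle.

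The main obstacle is the identity $\varphi_\Sigma(V_+)=\varphi_{\bar S}(\CF)$, that is, making precise the compatibility of the analytic mod $2$ index with the twisted pushforward $\bar\pi_!$. Two points need care: first, checking that the $K_{\bar S}^{1/2}\otimes\bar\pi^*K^{-1/2}$ twist built into $\bar\pi_!$ combines with the $K^{1/2}$ twist defining $\varphi_\Sigma$ to leave exactly $\CF\otimes K_{\bar S}^{1/2}$, so that the relevant spin structure downstairs is the pullback one; and second, justifying that the mod $2$ index is invariant under the finite pushforward $\bar\pi$. The latter is the $KO$-theoretic pushforward compatibility established in \cite[Section 5]{classes}, which I would invoke rather than reprove, the projection-formula computation above being its concrete incarnation in the present setting.
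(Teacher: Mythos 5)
Your proposal is correct and follows essentially the same route as the paper's proof: $\omega_1(V_+)$ from Lemma \ref{lemmaF}, $\omega_2(V_+)$ from \eqref{algo} together with the identity $\varphi_{\bar S}(\CF)=\varphi_\Sigma(\bar\pi_!(\CF))$ obtained by unwinding the twisted pushforward (the paper phrases your projection-formula step as ``by definition of direct image sheaf''), and $\omega_2(V_-)$ from $\omega_2(V)=\omega_2(V_+)+\omega_2(V_-)$ with $\omega_2(V)$ read off from which copy of the fibre one is in. The only (welcome) addition is that you explicitly justify the vanishing of the Whitney cross term $\omega_1(V_+)\cup\omega_1(V_-)$ via $x\cup x=0$ on an orientable surface, a step the paper takes for granted.
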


 \begin{proof}
 
 Recall that $
\varphi_{\Sigma}(\CL) = \dim H^0(\Sigma, \CL \otimes K^{1/2}) ~({\rm mod} ~2),$ and from \cite[Theorem 1]{classes} that for an even spin structure $K^{1/2}$, the orthogonal bundles $V_{\pm}$ satisfy
\begin{eqnarray}\omega_2(V_{\pm})= \varphi_{\Sigma}(V_{\pm})+\varphi_{\Sigma}(\det(V_{\pm})).\label{w2}\end{eqnarray}
Moreover, since $\deg(\det(V_{\pm}))=0$, one has that $\varphi_\Sigma(V_-)=\varphi_\Sigma(V_+)~({\rm mod}~2).$

The above  can also be seen in terms of the analytic mod 2 indices  $\varphi_S$ and $\varphi_{\bar S}$ of the spectral line bundles producing $V_+$ and $V_-$. The three mod 2 indices can be related by considering the definition of push forward of sheaves. Indeed, note that for $\CL$ a torsion two line bundle on $S$,   by definition of direct image sheaf 
\begin{eqnarray}
\varphi_S(\CL)=\dim H^0(S,  \CL \otimes K^{1/2}_S)~({\rm mod} ~2)
=\dim H^0(\Sigma,  \pi_*(\CL \otimes K^{1/2}_S \otimes \pi^*K^{-1/2}) \otimes K^{1/2})~({\rm mod} ~2),\nonumber
\end{eqnarray}
and hence
$\varphi_S(\CL)=\varphi_\Sigma(\pi_!(\CL)). $
An equivalent formula follows for $\bar S$, 
and therefore 
\begin{eqnarray}\omega_1(V_+)={\rm Nm}(\CF)\in H^1(\Sigma, \Z_2).\end{eqnarray}
Moreover,  since $\varphi_{\bar S}(\CF)=\varphi_\Sigma(\bar \pi_!(\CF))
$ 
it follows that 
\begin{eqnarray}
\omega_2(V_+)=\varphi_{\bar S}(\CF) +\varphi_\Sigma({\rm Nm}(\CF)).
\end{eqnarray}

In order to understand $\omega_2(V_-)$ through \eqref{w2}, we should recall that 
$\omega_2(V)=\omega_2(V_+)+\omega_2(V_-),$
and thus 
$ \omega_2(V_-)=\varphi_{\bar S}(\CF)+\varphi_{\Sigma}({\rm Nm}(\CF)) +\varphi_{\Sigma}(V).
$ The value of $\varphi_\Sigma(V)=\omega_2(V)$ has been studied in \cite[Section 4]{N3} and indicates whether $V$ has a lift to a spin bundle or not. In particular, it is shown there that it is the identity component of the fibre that gives spin bundles, which is  $\omega_2(V)=\varphi_\Sigma(V)=0$, and the theorem follows. 
 \end{proof}

  One should note that when further requiring $V_+$ to have trivial determinant, it becomes the vector bundle of an $SL(m,\R)$-Higgs pair and our result agrees with the description of $\omega_2(V_+)$ of  \cite[Theorem 1]{classes} for  a fixed even spin structure.
  When considering $SO(m,m+1)_0$-Higgs bundles, i.e. Higgs bundles in the component of the identity, both vector bundles $V_\pm$ satisfy $\det V_{\pm}=\CO$, and thus they are obtained by choosing a point $L_+\otimes \bar \pi^{*}K^{3/2}$ in the Prym variety ${\rm Prym}(\bar S, \Sigma)$, after fixing a choice of spin structure $K^{1/2}$. Moreover, in this case $M=4m(g-1)$ and thus $L^* \in {\rm Prym}(S,\Sigma)[2]$ is the pullback of a line bundle on $\bar S$, hence determined by $L_+$. 
 Since the characteristic class $\omega_2$ is independent of which spin structure $K^{1/2}$ is chosen, we may use this fact to further deduce the following from Theorem \ref{teo2} by fixing $K^{1/2}$ such that $\varphi_{\Sigma}(\CO)=1$,  along the lines of \cite[Theorem 1]{classes} purely in terms of spin structures: 
 \begin{corollary}\label{corro}
 Let $\bar S$ be a smooth spectral curve in the total space of $K^2\rightarrow \Sigma$ give by an equation
 \[ \eta^m+a_1\eta^{m-1}+\ldots+a_{m-1}\eta+a_m=0,\]
 and let $\CF$ be a line bundle on $\bar S$ such that $\CF^2\cong \CO$. Define $V_+:=\pi_!(\CF)$ the image bundle with the orthogonal structure induced from relative duality. 
 Let $K^{1/2}$ be an even spin structure on $\Sigma$,   and for  $K_{\bar S}^{1/2}=\bar \pi^*K^{m-1/2}$ the corresponding one on $\bar S$, consider the spin structure  $\CF_{\bar S}=\CF \otimes K_{\bar S}^{1/2}$. Then, the characteristic classes of the corresponding $SO(m,m+1)$-Higgs pair are
\begin{eqnarray}
\omega_1(V_+)&=&  {\rm Nm}(\CF)\in H^1(\Sigma, \Z_2),\\
\omega_2(V_+)&=&\left\{\begin{array} {ccc}
\varphi_{\Sigma}({\rm Nm}(\CF))&{\rm if} &\varphi_S(\CF_{\bar S})=0 \\
1+\varphi_{\Sigma}({\rm Nm}(\CF))&{\rm if} &\varphi_S(\CF_{\bar S})=1
\end{array}\right.,\\
\omega_2(V_-)&=&\left\{\begin{array} {ccc}
\varphi_{\Sigma}({\rm Nm}(\CF))&{\rm if} &\varphi_S(\CF_{\bar S})=\varphi_\Sigma(V)\\
1+\varphi_{\Sigma}({\rm Nm}(\CF))&{\rm if} & \varphi_S(\CF_{\bar S}) \neq\varphi_\Sigma(V)\end{array}\right..\end{eqnarray}
   \end{corollary}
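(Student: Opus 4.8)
The plan is to deduce Corollary \ref{corro} directly from Theorem \ref{teo2} by reinterpreting the spectral data in terms of spin structures and exploiting the independence of $\omega_2$ from the chosen $\Theta$-characteristic. The only genuinely new ingredient is the freedom to choose a \emph{particular} spin structure, so first I would fix $K^{1/2}$ to be an even spin structure satisfying $\varphi_\Sigma(\CO)=1$ (such a choice exists by standard theta-characteristic theory on $\Sigma$), and transport it to $\bar S$ via $K_{\bar S}^{1/2}=\bar\pi^*K^{m-1/2}$, which is consistent with the adjunction formula $K_{\bar S}=\bar\pi^*K^{2m-1}$ recorded earlier. With these choices I would then set $\CF_{\bar S}=\CF\otimes K_{\bar S}^{1/2}$ and observe that the analytic mod $2$ index $\varphi_S(\CF_{\bar S})$ of the corollary is, after unwinding definitions, exactly the quantity $\varphi_{\bar S}(\CF)$ appearing in Theorem \ref{teo2}. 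Concretely, since $\varphi_{\bar S}(\CF)=\dim H^0(\bar S,\CF\otimes K_{\bar S}^{1/2})\ (\mathrm{mod}\ 2)=\dim H^0(\bar S,\CF_{\bar S})\ (\mathrm{mod}\ 2)$, the two notations name the same $\Z_2$-valued invariant once the spin structure on $\bar S$ has been fixed as above.

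Having made this identification, the three formulas of the corollary follow by substituting into Theorem \ref{teo2}. For $\omega_1(V_+)$ there is nothing to do: it is $\mathrm{Nm}(\CF)$ in both statements, independent of the spin choice. For $\omega_2(V_+)$, Theorem \ref{teo2} gives $\omega_2(V_+)=\varphi_{\bar S}(\CF)+\varphi_\Sigma(\mathrm{Nm}(\CF))$; rewriting $\varphi_{\bar S}(\CF)$ as $\varphi_S(\CF_{\bar S})$ produces precisely the two cases of the corollary, according to whether $\varphi_S(\CF_{\bar S})=0$ or $1$. For $\omega_2(V_-)$ I would use the relation $\omega_2(V)=\omega_2(V_+)+\omega_2(V_-)$ already exploited in the proof of Theorem \ref{teo2}, so that $\omega_2(V_-)=\omega_2(V_+)+\varphi_\Sigma(V)$ with $\varphi_\Sigma(V)=\omega_2(V)$. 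Since $\omega_2(V_-)$ differs from $\omega_2(V_+)$ by exactly $\varphi_\Sigma(V)$, the two regimes of the corollary (agreement versus a shift by $1$) correspond to whether $\varphi_S(\CF_{\bar S})$ equals $\varphi_\Sigma(V)$ or not, which is just the re-casting of the Theorem \ref{teo2} case split into spin-theoretic language.

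The step I expect to be the main obstacle is verifying that fixing the even spin structure with $\varphi_\Sigma(\CO)=1$ does not silently alter any of the $\Z_2$ indices in a way that breaks the clean case split. The subtlety is that $\varphi_\Sigma$ depends on the choice of $K^{1/2}$ even though $\omega_2$ does not, so I must check that the \emph{differences} of indices that enter Theorem \ref{teo2} — namely $\varphi_{\bar S}(\CF)+\varphi_\Sigma(\mathrm{Nm}(\CF))$ and $\varphi_\Sigma(V)$ — are genuinely the spin-invariant combinations guaranteed by \cite[Theorem 1]{classes}. This is where the normalisation $\varphi_\Sigma(\CO)=1$ is used: it calibrates the additive-to-multiplicative isomorphism $\widetilde{KO}(\Sigma)\to KO(\Sigma)$ of the preceding subsection so that the recorded relation $\varphi_\Sigma(\Omega)=1$ holds with the right sign, and then the compatibility $\varphi_S(\CL)=\varphi_\Sigma(\pi_!(\CL))$ (together with its $\bar S$ analogue) transports everything to the spectral side without introducing a spurious constant. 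Once this bookkeeping is confirmed, the corollary is immediate and no further computation is required.
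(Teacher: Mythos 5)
Your proposal is correct and takes essentially the same route as the paper, which derives the corollary from Theorem \ref{teo2} in exactly this way: identify $\varphi_S(\CF_{\bar S})$ with $\varphi_{\bar S}(\CF)$ for the chosen spin structure, split $\omega_2(V_+)=\varphi_{\bar S}(\CF)+\varphi_\Sigma({\rm Nm}(\CF))$ into the two cases, and use $\omega_2(V_-)=\omega_2(V_+)+\omega_2(V)$ with $\omega_2(V)=\varphi_\Sigma(V)$ for the second case split. One minor caveat (inherited from the paper's own preamble): an even theta-characteristic has $\varphi_\Sigma(\CO)=0$ rather than $1$, so your parenthetical existence claim is off, but this normalisation is never actually used in the substitution and does not affect the displayed formulas.
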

  
\subsection{The divisor $D\in \Z([a_m])/b_0$}
  We shall finally consider the geometric implications of the divisor $D\in \Z([a_m])/b_0$ appearing in the spectral data of the Higgs bundles studied in this paper. As mentioned previously,   the extension class giving the orthogonal bundle $V$ is obtained through $D$. Moreover, its degree $M$ appears both at the level of complex $SO(2m+1,\C)$-Higgs bundles (see \cite[Remark 2 p.14]{N3}) and real $SO(m,m+1)$-Higgs bundles.   
  
From \cite[Section 6]{classes}, and as recalled in Section \ref{section-sp},   the elements in ${\rm Prym}(S,\bar S)[2]$ can be distinguished by their associated invariant $M$, and in each regular fibre there are \begin{small}
\begin{eqnarray}
\left(\begin{array}{c} 4m(g-1)\\M
\end{array}\right)\times 2^{2g_{\bar S}}\nonumber
\end{eqnarray}\end{small}
 points with invariant $M$, where the genus of $\bar S$ is as before, $g_{\bar S}=( 2 m^2 - m ) ( g - 1 ) + 1$. Hence, in order to differentiate the characteristic classes for   $SO(m,m+1)$-Higgs bundles, one needs to understand the characteristic classes of $\rho^*H^1(\bar S, \Z_2)$. In particular, it should be noted that since the Prym variety ${\rm Prym}(S,\bar S)$ is defined as the set of line bundles $L\in {\rm Jac}(S)$ such that $\sigma^*L\cong L^*$, the pulled-back line bundles in  $\rho^*H^1(\bar S, \Z_2)$ are acted on trivially by the involution $\sigma$ and thus carry invariant $M=0$. Therefore, recalling that the topological invariant $M$ associated to $SO(m,m+1)$-Higgs bundles can be seen from \eqref{am} as the degree of the subdivisor of $[a_m]$ giving an element in  $\Z_2([a_m])^{ev}$, one has the following:
 
 \begin{proposition}\label{numbers}
In each generic fibre there are  \begin{small}
$\left(\begin{array}{c} 4m(g-1)\\M
\end{array}\right)
$ \end{small} points  with even invariant $M$.\end{proposition}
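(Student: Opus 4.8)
The plan is to turn the count into elementary combinatorics on the zero divisor of $a_m$, using the identification of the fibre from Proposition \ref{fibreSO}. By that proposition, the intersection of $\CM_{SO(m,m+1)}$ with a regular fibre is two copies of $\Pri(S,\bar S)[2]/\rho^* H^1(\bar S,\Z_2)$, and the short exact sequence before \eqref{am} identifies this quotient with $\Z_2([a_m])^{ev}/b_0$. A point of the fibre thus records a subdivisor $D$ of $[a_m]$, and its invariant $M$ is the degree of $D$: the number of ramification points at which the involution $\sigma$ acts as $-1$.

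First I would note that at a generic point of the Hitchin base the spectral curve is smooth, so $a_m \in H^0(\Sigma,K^{2m})$ has simple zeros and $[a_m]$ consists of $N:=\deg K^{2m}=2m(2g-2)=4m(g-1)$ distinct points. A subdivisor of degree $M$ is then a choice of $M$ of these $N$ points, giving $\binom{N}{M}$ possibilities; as $N$ is even, the parity constraint cutting out $\Z_2([a_m])^{ev}$ is precisely that $M$ be even, so every even $M$ is realised and none is overcounted.

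To confirm the normalisation I would cross-check against the count already used in Section \ref{section-sp}: the number of elements of $\Pri(S,\bar S)[2]$ with invariant $M$ is $\binom{N}{M}\times 2^{2g_{\bar S}}$, while $\rho^* H^1(\bar S,\Z_2)$ is exactly the subgroup of $2^{2g_{\bar S}}$ line bundles on which $\sigma$ acts trivially at every fixed point, i.e.\ those with $M=0$. Since the invariant depends only on the image in $\Z_2([a_m])^{ev}/b_0$, it is constant along each coset of $\rho^* H^1(\bar S,\Z_2)$; passing to the quotient therefore collapses each coset to a single point and divides the invariant-$M$ count by $2^{2g_{\bar S}}$, leaving $\binom{N}{M}$.

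The step I expect to require the most care is matching this with the fact that the fibre carries two copies of the quotient rather than one. Within a single copy the divisor $D$ and its complement $D^c$ determine the same class in $\Z_2([a_m])^{ev}/b_0$, so the degree is a priori only the unordered pair $\{M,N-M\}$; it is the two copies, distinguished by whether the orthogonal bundle $V$ lifts to a spin bundle, that restore $M$ as a genuine invariant. Concretely I would argue that the spin/non-spin dichotomy of Proposition \ref{fibreSO} corresponds exactly to the choice between $D$ and $D^c$, so that the two copies together parametrise all even subdivisors $D\subseteq[a_m]$, of which exactly $\binom{N}{M}$ have degree $M$; the consistency check is the identity $\sum_{M\ \text{even}}\binom{N}{M}=2^{N-1}=2\cdot 2^{N-2}$, matching twice the cardinality of the quotient. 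Pinning down this correspondence precisely, via the extension-class and spin analysis of \cite[Section 4]{N3}, is the only genuinely non-combinatorial ingredient.
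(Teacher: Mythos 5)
Your first three paragraphs reproduce, in slightly more explicit form, the argument the paper itself gives in the text immediately preceding the proposition: the invariant $M$ is read off as the degree of the subdivisor $D$ determining a class in $\Z_2([a_m])^{ev}/b_0$, and the count $\binom{4m(g-1)}{M}$ is obtained from the symplectic count $\binom{4m(g-1)}{M}\times 2^{2g_{\bar S}}$ of \cite[Section 6]{classes} by observing that $\rho^*H^1(\bar S,\Z_2)$ consists precisely of the $2^{2g_{\bar S}}$ line bundles on which $\sigma$ acts trivially at every fixed point, so that the invariant is constant on cosets and the quotient divides the count by $2^{2g_{\bar S}}$. That part is correct and is essentially the paper's proof.

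The final paragraph, however, contains a step that would fail. You correctly identify the real issue — in $\Z_2([a_m])^{ev}/b_0$ the subdivisors $D$ and $D^c=D+b_0$ are identified, so only the unordered pair $\{M,\,4m(g-1)-M\}$ is intrinsic — but your proposed resolution, that the two copies of the quotient in Proposition \ref{fibreSO} are indexed by the choice between $D$ and $D^c$, conflates two independent $\Z_2$'s. The two copies are distinguished by $\omega_2(V)$, i.e.\ by whether $V$ lifts to a spin bundle (this is exactly the extra bit entering the formula for $\omega_2(V_-)$ in Theorem \ref{teo2}); each copy is a full copy of $\Z_2([a_m])^{ev}/b_0$ and therefore contains points of every admissible $M$, so the spin dichotomy cannot disambiguate $D$ from $D^c$. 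The ambiguity $D\leftrightarrow D^c$ instead comes from the sign ambiguity in the isomorphism $\sigma^*L\cong L$ (equivalently, replacing $\sigma$ by $-\sigma$) and is already present for a single line bundle $L\in\Prym(S,\bar S)[2]$ — note that $\sum_{M\ \mathrm{even}}\binom{N}{M}\times 2^{2g_{\bar S}}=2^{N-1}\times 2^{2g_{\bar S}}$ is already \emph{twice} the cardinality of $\Prym(S,\bar S)[2]$, so the double counting occurs upstream of the quotient and of any choice of component. The paper's resolution is simply to identify $M$ with $4m(g-1)-M$ (as it states immediately after the proposition, before summing and halving to get the total $2^{4m(g-1)-2}$); under this identification each class $\{D,D^c\}$ with $M\neq 2m(g-1)$ has exactly one representative of degree $M$, so the count $\binom{4m(g-1)}{M}$ stands without invoking the two components. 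Your version should be repaired by replacing the spin-dichotomy argument with this identification.
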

  Since exchanging $\sigma$ by $-\sigma$ exchanges the values of $M$ and $4m(g-1)-M$, those two cases should be identified. Hence,    the total number of points in each regular fibre is half of 
 \begin{eqnarray}
 \left(\begin{array}{c} 4m(g-1)\\0
\end{array}\right)+ \left(\begin{array}{c} 4m(g-1)\\2
\end{array}\right)+\ldots+ \left(\begin{array}{c} 4m(g-1)\\4m(g-1)-2
\end{array}\right)+ \left(\begin{array}{c} 4m(g-1)\\4m(g-1)
\end{array}\right),
 \end{eqnarray}
 which using series multisection gives, as expected, 
$  \frac{1}{2}\left[ 2^{4m(g-1)-1} \right]. $
 

 \subsection{On the geometry of the moduli space}\label{geometrytori}

From the above analysis, one has a natural grading of the moduli space leading to a  geometric description of Zariski dense open sets in the moduli space of $SO(m,m+1)$-Higgs bundles:
\begin{theorem}\label{teo1}
For each fixed even invariant $0<M\leq 4m(g-1)$, the moduli space of \linebreak $SO(m,m+1)$-Higgs bundles intersects the regular fibres of the Hitchin fibration in a   component given by a covering of a vector space over the symmetric product $S^M\Sigma$.  \end{theorem}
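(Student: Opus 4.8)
The plan is to adapt the proof of Theorem~\ref{comp-sp} (and of \cite[Theorem~4.2]{umm}) to the orthogonal setting, using the fibre description of Proposition~\ref{fibreSO} in place of the symplectic one. The essential structural difference is that the $\Z_2$-vector space fibration of the symplectic case is now replaced by a covering, reflecting the passage from ${\rm Prym}(S,\bar S)[2]$ to the quotient ${\rm Prym}(S,\bar S)[2]/\rho^*H^1(\bar S,\Z_2)$ (in two copies) recorded in that proposition.

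First I would extract the base $S^M\Sigma$. By the decomposition \eqref{am}, the spectral datum $D\in\Z_2([a_m])^{ev}/b_0$ of degree $M$ is a subdivisor of the ramification divisor $[a_m]$, hence an effective divisor of degree $M$ on $\Sigma$; allowing $D$ to vary identifies this datum with a point of the symmetric product $S^M\Sigma$, exactly as in the symplectic case.

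Next I would build the vector space over $S^M\Sigma$. Once $D$ is fixed, the coefficient $a_m$ is forced to vanish along $D$, so $a_m\in H^0(\Sigma,K^{2m}(-D))$; as $D$ ranges over $S^M\Sigma$ these spaces assemble into a vector bundle, while the remaining coefficients contribute the free linear directions $\bigoplus_{i=1}^{m-1}H^0(\Sigma,K^{2i})$. The total space of this bundle over $S^M\Sigma$ is the \emph{vector space over the symmetric product} of the statement, parametrising the admissible smooth spectral curves $\bar S$ that carry the prescribed invariant $M$.

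Finally I would produce the covering. Over each such spectral curve the remaining datum is the line bundle $\CF\in H^1(\bar S,\Z_2)$ giving $V_+=\bar\pi_!(\CF)$, together with the two-fold spin/non-spin choice of Proposition~\ref{fibreSO}; since $H^1(\bar S,\Z_2)$ is a finite $\Z_2$-vector space, this yields a finite set of Higgs bundles over each point of the vector bundle and exhibits the component as a covering of it. The hard part will be establishing this covering structure globally over the smooth locus of the Hitchin base: one must verify that the finite fibre is locally constant---which follows from the constancy of $g_{\bar S}=(2m^2-m)(g-1)+1$, and hence of $\dim_{\Z_2}H^1(\bar S,\Z_2)$, as $\bar S$ moves through smooth covers of fixed topological type---and one must reconcile the two copies together with the quotient by $\rho^*H^1(\bar S,\Z_2)$ with the intrinsic parametrisation by $\CF$, along the lines of the splitting \eqref{seq-3}--\eqref{seq-4} and Lemmas~\ref{lemmaU}--\ref{lemmaF}.
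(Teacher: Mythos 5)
Your overall strategy---extracting $D$ as a point of $S^M\Sigma$, realising $a_m$ as a section of $K^{2m}(-D)$ so that the admissible spectral curves assemble into a vector bundle over the symmetric product together with the free directions coming from the remaining differentials, and then accounting for residual discrete data---is exactly the route the paper takes: its proof follows Theorem \ref{comp-sp} and \cite[Theorem 4.2]{umm}, and your range $\bigoplus_{i=1}^{m-1}H^0(\Sigma,K^{2i})$ for the remaining coefficients is the natural one given the equation \eqref{yo} and the base $\mathcal{A}=\bigoplus_{i=1}^{m}H^0(\Sigma,K^{2i})$.

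The one substantive problem is your identification of the covering. You take its fibre to be $H^1(\bar S,\Z_2)$ (times the two-fold spin choice), but that group is precisely what is quotiented out in passing from the symplectic fibre ${\rm Prym}(S,\bar S)[2]$ to the orthogonal fibre ${\rm Prym}(S,\bar S)[2]/\rho^*H^1(\bar S,\Z_2)$ of Proposition \ref{fibreSO}: under the splitting \eqref{am} this quotient is identified with $\Z_2([a_m])^{ev}/b_0$ alone, so once $D$ is fixed there is no surviving $2^{2g_{\bar S}}$-worth of choices of $\CF$. This is also forced by Proposition \ref{numbers}: a regular fibre contains only $\binom{4m(g-1)}{M}$ points of invariant $M$---one for each subdivisor $D$---whereas your covering would produce $2^{2g_{\bar S}+1}\binom{4m(g-1)}{M}$ of them. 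The paper's proof handles this by carrying out the parametrisation \emph{up to} $H^1(\bar S,\Z_2)$, so that $\CF$ enters only as a representative used to reconstruct $V_+=\bar\pi_!(\CF)$, and the residual discrete data over a point of the vector bundle reduces to the spin/non-spin dichotomy of the two copies in Proposition \ref{fibreSO}. You do flag the reconciliation of the quotient with the parametrisation by $\CF$ as ``the hard part,'' but then assert the wrong answer for the covering fibre; as written, the component you construct is too large, and the deferred reconciliation is not a technical verification but the step that changes the conclusion.
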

\begin{proof}
 Over a point in the Hitchin base, defining a spectral curve, one has  $\Z_2([a_m])^{ev}/d_0 $. This is all choices of $4m(g-1)$ $\Z_2$-uples $D$ with an even number of $+1$, up to the element $(1,\ldots, 1)$ and equivalence. As in \cite[Proposition 4]{classes}, this agrees with the previous section, asserting that the intersection of the space with the fibre is a $\Z_2$ vector space of dimension
    $ 4m(g-1)-2$.
     
    As in \cite[Theorem 4.2]{umm}, the choice of the divisor $D$ (or equivalently, a point in $\Z_2([a_m])^{ev}/d_0$)  is given by a point in the symmetric product $S^M\Sigma$, for $M$ the degree of $D$.
     Then, the choice of $a_m$ is given by the choice of a section $s\in H^0(\Sigma, K^{2m}(-D))$, leading to a vector bundle $B$ of rank $(4m-1)(g-1)- M$ over $S^M\Sigma$. 
     Finally, the choice of the spectral curve is completed by considering, as in the symplectic side, the space 
$\bigoplus_{i=1}^{2m-2}H^0(\Sigma, K^{2i}), $
 where the parametrisation is done up to $H^1(\bar S, \Z_2)$.
\end{proof}
 One should keep in mind that the characteristic classes of the $SO(m,m+1)$-Higgs bundles are topological invariants, and thus are constant within connected components. On the other hand, the invariant $M$ labels components which often intersect over the singular locus of the Hitchin fibration. An interesting comparison can be made with \cite[Section 6.4]{brian}, where it is shown how the invariant $M$ labels certain connected components of the moduli space. One should note also that the space $H^1(\bar S, \Z_2)$ is in fact the spectral data for $K^2$-twisted $GL(m,\R)$-Higgs bundles, and thus over each point in the Hitchin base one has the fibre giving the spectral data for a corresponding $K^2$-twisted $GL(m,\R)$-Higgs bundle, the Cayley partners.
 \begin{corollary}\label{cor1}
When $M=0$ and $m$ is odd the intersection with smooth fibres is given by $2^{2g}$ copies of ${\rm Prym}(\bar S,\Sigma)$ over a vector space. 
\end{corollary}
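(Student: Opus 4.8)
The plan is to deduce the statement from Theorem~\ref{teo1} by specialising to $M=0$ and then identifying the resulting covering in the odd case. First I would observe that when $M=0$ the divisor $D\in\Z_2([a_m])^{ev}/b_0$ is trivial, so the symmetric product $S^M\Sigma=S^0\Sigma$ appearing in Theorem~\ref{teo1} collapses to a point. Consequently the ``vector bundle over $S^M\Sigma$'' there reduces to the single vector space $H^0(\Sigma,K^{2m})$ (no point is removed from $[a_m]$, since $D=0$), and together with the remaining differentials $\bigoplus_i H^0(\Sigma,K^{2i})$ this assembles into one vector space $\mathcal V$ parametrising the choice of spectral curve. Thus the intersection is a covering of $\mathcal V$, and it remains only to compute the fibre of this covering.

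Next I would invoke the symmetry $\sigma\mapsto-\sigma$, which exchanges $M$ with $4m(g-1)-M$ and hence identifies the $M=0$ stratum with the maximal stratum $M=4m(g-1)$ discussed after Theorem~\ref{teo2}. In that description the divisor is the full ramification divisor, the line bundle $L\in\Prym(S,\bar S)$ is pulled back from $\bar S$, and both bundles $V_\pm$ have trivial determinant and are reconstructed from a single point $L_+\otimes\bar\pi^*K^{3/2}$ of the \emph{full} Prym variety $\Prym(\bar S,\Sigma)$ once the spin structure $K^{1/2}$ is fixed. This is the step that upgrades the torsion-two datum $\CF$ into a genuine continuous Prym parameter: the $M=0$ (equivalently maximal) locus is the Cayley/Hitchin-type locus on which the fibre is an honest abelian variety rather than a finite set.

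I would then extract the discrete factor from the splitting of $H^1(\bar S,\Z_2)$ recorded in Lemma~\ref{lemmaH}. For $m$ odd the Norm sequence splits via $\bar\pi^*$ (because $\mathrm{Nm}\circ\bar\pi^*$ is multiplication by $m$, which is the identity on $\Z_2$), giving $H^1(\bar S,\Z_2)\cong\Prym(\bar S,\Sigma)[2]\oplus\bar\pi^*H^1(\Sigma,\Z_2)$. The summand $\Prym(\bar S,\Sigma)[2]$ is precisely the two-torsion already contained in the continuous parameter $L_+$, whereas $\bar\pi^*H^1(\Sigma,\Z_2)\cong H^1(\Sigma,\Z_2)$ is a $\Z_2$-vector space of dimension $2g$ and contributes the $2^{2g}$ disjoint copies. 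Combining the vector space $\mathcal V$ of differentials, the $2^{2g}$ choices from $\bar\pi^*H^1(\Sigma,\Z_2)$, and the continuous parameter in $\Prym(\bar S,\Sigma)$ yields the asserted description as $2^{2g}$ copies of $\Prym(\bar S,\Sigma)$ over a vector space.

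The main obstacle I expect is exactly the second step: justifying rigorously that the maximal stratum carries a full Prym variety of moduli rather than only its finite two-torsion. Concretely I must check that, once $L=\rho^*L_+$ is forced by $M=4m(g-1)$, the extension \eqref{extO} together with the reconstruction of $V_-$ really varies over all of $\Prym(\bar S,\Sigma)$ without unwanted identifications, and that the parity bookkeeping of the two copies in Proposition~\ref{fibreSO} (the spin versus non-spin lift, governed by $\omega_2(V)$ in Theorem~\ref{teo2}) is consistently absorbed into the count $2^{2g}$ for $m$ odd rather than doubling it. The remaining verifications, namely that $\mathcal V$ is genuinely a vector space and that the $2^{2g}$ copies are disjoint, are routine given Theorem~\ref{teo1} and Lemma~\ref{lemmaH}.
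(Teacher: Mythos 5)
Your argument follows the same route the paper takes (implicitly, since Corollary \ref{cor1} is stated without an explicit proof): specialise Theorem \ref{teo1} to $M=0$ so that $S^M\Sigma$ collapses to a point and the base becomes a vector space of differentials, identify the $M=0$ stratum with the maximal stratum $M=4m(g-1)$ where the continuous $\Prym(\bar S,\Sigma)$ parameter appears (exactly as in the discussion of $SO(m,m+1)_0$-Higgs bundles following Theorem \ref{teo2}), and extract the $2^{2g}$ discrete copies from the $m$-odd splitting $H^1(\bar S,\Z_2)\cong\Prym(\bar S,\Sigma)[2]\oplus\bar\pi^*H^1(\Sigma,\Z_2)$ of Lemma \ref{lemmaH}, absorbing the two-torsion summand into the continuous Prym factor. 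The obstacle you flag --- rigorously promoting the torsion-two spectral datum to the full Prym variety --- is precisely the point the paper itself leaves at the level of the Cayley-partner remark preceding the corollary, so your proposal is faithful to, and no less complete than, the paper's own justification.
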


\section{Concluding remarks}\label{last}

In what follows, we shall describe some applications of the above methods in the context of understanding the moduli spaces for other real groups. 

\subsection{The $Sp(2m,\R)$ and $SO(m,m+1)$ Hitchin components} \label{hitchin}When considering the Hitchin components for both split real forms $Sp(2m,\R)$ and $SO(2m+1,\R)$ as described in \cite{N5}, one can see that the vector bundle for $Sp(2m,\R)$ is given by (e.g. see \cite[p.4]{classes})
 \begin{eqnarray}
  E:=\bigoplus_{i=1}^{2m} \left (K^{-m+i}\otimes K^{-1/2} \right ).
  \end{eqnarray}
   Then, by considering $E\otimes K^{-1/2}\oplus K^m$ one obtains, as expected, the  orthogonal  bundle  for $SO(m,m+1)$-Higgs bundles 
    \begin{eqnarray}
  V:= \bigoplus_{i=0}^{2m}K^{-m+i}.    \end{eqnarray}
  The pairing for the symplectic vector bundle $E=W\oplus W^*$ is obtained by considering the symplectic pairing between $K^{\pm a}$, leading to
  \begin{eqnarray}
  W= \bigoplus_{i=m+1}^{2m} \left (K^{-m+i}\otimes K^{-1/2} \right ).
  \end{eqnarray}
  On the other hand, the pairing for the orthogonal bundle $V=V_+ \oplus V_-$ is obtained by taking the natural orthogonal structure for each $K^a\oplus K^{-a}$ and thus one has for $m$ even
  \begin{eqnarray}
  V_-=\bigoplus_{i=0}^{m-1} K^{-m+2i+1};~{~\rm~and ~}
  V_+=\bigoplus_{i=0}^{m}  K^{-m+2i} ~= ~K^m \oplus \bigoplus_{i=0}^{m-1}  K^{-m+2i}, \label{v-}
  \end{eqnarray}
  and if $m$ is odd, the roles of $V_-$ and $V_+$ are interchanged. 
 %
One should note that, in particular, separating the vector bundle  $W=W_{+}\oplus W_{-}$ into the odd (-) and even (+) values of $i$, one has  
\begin{eqnarray}V_-&=&W_{-}\oplus W^*_{-} ~{\rm ~ and ~}~
V_+=W_{+}\oplus W^*_{+},
\end{eqnarray}
and thus the relation between both decompositions of the symplectic bundle and orthogonal bundle become apparent. 
In the case of $SL(m,\R)$-Higgs bundles, the Hitchin component is given by Higgs bundles whose underlying vector $\tilde \CV$ bundle has the form (see \cite[Section 3]{N5}) 
\begin{eqnarray} 
\tilde \CV=\bigoplus_{i=0}^{m-1} K^{\frac{-m+1+2i}{2}}.
\end{eqnarray}
This bundle is obtained from the origin in the fibre of the Hitchin fibration, and a similar construction leads to the Hitchin component for $K^2$ twisted $SL(m,\R)$-Higgs bundles, where
$\CV=\bigoplus_{i=0}^{m-1} K^{-m+1+2i}.
$ In particular, this rank $m$ vector bundle coincides with $V_-$ in  \eqref{v-}, which is not surprising, as it follows from the proof of Theorem \ref{teo2}. 

\subsection{Maximal $Sp(4,\R)$-Higgs bundles and $SO(3,4)$-Higgs bundles}\label{mono}

It is interesting to consider the description of the connected components given in Section \ref{section-sp} for $m=2$, or in other words, for $Sp(4,\R)$-Higgs bundles. In this case, it was shown in Gothen's thesis (see \cite[p. 831-849]{gothen}) that for maximal Toledo invariant (i.e. $M=0$), the number of connected components is 
$3\cdot 2^{2g}+2g-4.$ 
As in the general case,  the components are described by $H^1(\bar S, \Z_2)$ over a vector bundle over the symmetric product $S^M\Sigma$. But in the case of $m=2$ one has one more correspondence to consider. Indeed, $H^1(\bar S, \Z_2)$ becomes the spectral data for $K^2$-twisted $GL(2,\R)$-Higgs bundles, the Cayley partner of $Sp(4,\R)$-Higgs bundles. As in \cite[Theorem 6.8]{mono}, one has that 
$H^1(\bar S, \Z_2)=\Lambda_{\Sigma}[2]\oplus \Z_2([a_m])^{ev}/b_0 \oplus \Lambda_{\Sigma}[2]
$ and as seen in \cite[Corollary 6.9]{mono}, one recovers the  $3\cdot 2^{2g}+2g-4$ components as orbits of the monodromy action. Moreover, from the description in Section \ref{section-sp}, these components appear as the components of $K^2$-twisted Higgs bundles over the vector space $\mathcal{A}$. The geometry of these components can be studied by similar methods to those in Section \ref{section-sp} and Section \ref{section-so}, by noting that a choice in $\Z_2([a_m])^{ev}/b_0$ gives a point in a symmetric product labeled by the invariant $M$, and over that one has $2^{2g}$ covers coming from $H^1(\Sigma,\Z_2)$.
%

\subsection{The dual $(B,B,B)$-branes} The smooth locus of the moduli space of $SO(2m+1,\C)$-Higgs bundles on  $\Sigma$  is a hyper-K\"ahler manifold, so there are natural complex structures $I,J,K$ obeying the same relations as the imaginary quaternions (following the notation of \cite{LPS_Kap}).
  Adopting physicists' language,  a Lagrangian submanifold of a symplectic manifold is called an {\em A-brane} and a complex submanifold a {\em B-brane}. A submanifold of a hyper-K\"ahler manifold may be of type $A$ or $B$ with respect to each of the complex or symplectic structures, and thus choosing a triple of structures one may speak of branes of type $(B,B,B), (B,A,A), (A,B,A)$ and $(A,A,B)$.  
 The moduli space of $SO(m,m+1)$-Higgs bundles is a $(B,A,A)$-brane in the moduli space $\CM_{SO(2m+1,\C)}$ of complex Higgs bundles. As such, it has a dual $(B,B,B)$-brane in the dual moduli space $\CM_{Sp(2m,\C)}$ (see \cite{LPS_Kap}).
  It was conjectured  by D.~Baraglia and the author, in \cite[Section 7]{slice}, that the support of the dual $(B,B,B)$-brane should be the whole moduli space of $Sp(2m,\C)$-Higgs bundles, which can now be understood through the spectral data description of the components of $\CM_{SO(m,m+1)}$ given in this paper, and compared to the hyperholomorphic $(B,B,B)$-brane constructed by Hitchin in \cite[Section 7]{classes} dual to the $U(m,m)$-Higgs bundles  studied in \cite{umm}. In particular, $SO(m,m+1)$ and $U(m,m)$-Higgs bundles provide examples of $(B,A,A)$-branes whose dual $(B,B,B)$-brane should have the same support, and the moduli spaces of split Higgs bundles in Section \ref{hitchin} provide examples of associated $(B,A,A)$-branes.

\end{document}